\newcommand{\arxiv}[1]{\href{http://arxiv.org/abs/#1}{\tt arXiv:\nolinkurl{#1}}}
\newcommand\C{\mathbb{C}}
\newcommand\Z{\mathbb{Z}}
\newcommand\rat{\mathrm{rat}}
\newcommand\Vir{\mathrm{Vir}}
\newcommand\ab{\mathrm{ab}}
\newcommand\rss{\mathrm{ss}}
\newcommand\g{\ensuremath{\mathfrak{g}}}
\newcommand\frM{\mathfrak{M}}
\newcommand\frK{\mathfrak{K}}
\newcommand\sfm{\mathsf{m}}
\newcommand\bx{\mathbf{x}}
\newcommand\cL{\mathcal{L}}
\newcommand\cE{\mathcal{E}}
\newcommand\cR{\mathcal{R}}
\newcommand\ot{\otimes}
\newcommand\lan{\langle} \newcommand\ran{\rangle}
\newcommand\ts{\textstyle}
\newcommand\tta{{\tt a}}
\newcommand\co{\colon}
\renewcommand\AA{\mathbb{A}}
\newcommand\scS{\mathcal{S}}
\newcommand\fra{\mathfrak{a}}
\newcommand\frg{\g}
\newcommand\frh{\mathfrak{h}}
\newcommand\lsl{\ensuremath{\mathfrak{sl}}}
\newcommand\frn{\mathfrak{n}}
\newcommand\frZ{\mathfrak{Z}}
\newcommand\uce{\mathfrak{uce}}
\newcommand{\rmH}{\mathrm{H}}
\newcommand\al{\alpha}
\newcommand\ga{\gamma}
\newcommand\Ga{\Gamma}
\newcommand\la{\lambda}
\newcommand\rh{\rho}
\newcommand\si{\sigma}
\newcommand\ta{\tau}
\DeclareMathOperator{\Hom}{Hom}
\DeclareMathOperator{\Ext}{Ext}
\DeclareMathOperator{\End}{End}
\DeclareMathOperator{\Aut}{Aut}
\DeclareMathOperator{\Spec}{Spec}
\DeclareMathOperator{\Der}{Der}
\DeclareMathOperator{\IDer}{IDer}
\DeclareMathOperator{\Id}{Id}
\DeclareMathOperator{\Supp}{Supp} % Support
\DeclareMathOperator{\Int}{Int}
\DeclareMathOperator{\Out}{Out}
\DeclareMathOperator{\maxSpec}{maxSpec}
\DeclareMathOperator{\ad}{ad}
\DeclareMathOperator{\ev}{ev}
\DeclareMathOperator{\Ker}{Ker}
\newtheorem{theo}{Theorem}[section]
\newtheorem{prop}[theo]{Proposition}
\newtheorem{cor}[theo]{Corollary}
\theoremstyle{definition}
\newtheorem{defin}[theo]{Definition}
\newtheorem*{rem*}{Remark}
\newtheorem{rems}[theo]{Remarks}
\newtheorem{example}[theo]{Example}
\numberwithin{equation}{section} \allowdisplaybreaks
\renewcommand{\theenumi}{\alph{enumi}}
\renewcommand{\p@enumii}{\theenumi.}
\newcommand{\lv}[1]{}
\begin{document}
%
%%%%%%%%%%%%%%%%%%%%%%%%%%%%%%%%%%%

\title{A survey of equivariant map algebras with open problems}

%    Information for first author
\author{Erhard Neher}
%    Address of record for the research reported here
\address{Department of Mathematics and Statistics, University of Ottawa, Ottawa, Ontario K1N 6N5, Canada}
\email{neher@uottawa.ca}
%    \thanks will become a 1st page footnote.
\thanks{The first author is supported by a Discovery grant from NSERC}

%    Information for second author
\author{Alistair Savage}
\address{Department of Mathematics and Statistics, University of Ottawa, Ottawa, Ontario K1N 6N5, Canada}
\email{alistair.savage@uottawa.ca}
\thanks{The second author is supported by a Discovery grant from NSERC}

%    General info
\subjclass[2010]{Primary 17-02; Secondary 17A60, 17B10, 17B20, 17B56, 17B65, 17B67, 17B68}
\date{\today}

\dedicatory{This paper is dedicated to the people of India who made our stay
such a pleasant experience.}

\keywords{Equivariant map algebras, finite-dimensional representations}

\begin{abstract}
This paper presents an overview of the current state of knowledge in the field of equivariant map algebras
and discusses some open problems in this area.
\end{abstract}

\maketitle \thispagestyle{empty}

\tableofcontents

%%%%%%%%%%%%%%%%%%%%%%%%%%%%%%%%%%%%%%%%%%%%%%%%%%%%%%%%%%%%%
%
\section*{Introduction}
%
%%%%%%%%%%%%%%%%%%%%%%%%%%%%%%%%%%%%%%%%%%%%%%%%%%%%%%%%%%%%%

Equivariant map algebras are a large class of algebras that are generalizations of current and loop algebras.  Let $X$ be an affine scheme (e.g.\ an algebraic variety) and let $\fra$ be a finite-dimensional algebra (e.g.\ a Lie or associative algebra), both defined over a field $k$, often assumed to be algebraically closed.  Furthermore, suppose that a finite group $\Gamma$ acts on both $X$ and $\fra$ by automorphisms.  Then the \emph{equivariant map algebra} (or EMA) $\frM = M(X,\g)^\Gamma$ is the algebra of regular maps $X \to \fra$ which are equivariant with respect to the action of $\Gamma$.  One can also give a more algebraic definition of these algebras.  The action of $\Gamma$ on $X$ induces an action on the coordinate ring $A$ of $X$.  Then $\frM = (\fra \otimes A)^\Gamma$, the fixed points of the diagonal action of $\Gamma$ on $\fra \otimes A$.

Perhaps the most important, and certainly the most well-studied, EMAs are the (twisted) loop Lie algebras or, more generally, the (twisted) multiloop Lie algebras.  These are the EMAs where $k$ is an algebraically closed field of characteristic zero, $X$ is the torus $(k^\times)^n$ (where $n=1$ for the loop algebras), $\fra$ is a simple Lie algebra and $\Gamma$ is a product of $n$  cyclic groups acting in a natural way on $X$ and $\fra$ (see~Example~\ref{eg:multiloop}).  Loop Lie algebras play a crucial role in the theory of affine Lie algebras, as do multiloop Lie algebras for extended affine Lie algebras (see \cite{abfp2}).  Other important examples of EMAs include (twisted) current algebras and the Onsager algebra.

However, the class of EMAs is much larger than the set of examples mentioned above.  Nevertheless, it turns out that one can say a significant amount about EMAs and their representation theory in a very general setting.  For instance, for equivariant map Lie or associative algebras, one can classify their irreducible finite-dimensional representations and, in the Lie case, describe the extensions between these representations.

In the current paper we survey the present-day state of knowledge in the theory of equivariant map algebras and their representations.  A significant portion of the exposition is also devoted to open problems in the field. We begin in Section~\ref{sec:general} with the most general definition of an EMA, where the ``target'' algebra is any finite-dimensional algebra (rather than the Lie algebra case most often considered in the literature).  We also introduce there some of the main definitions and examples.  In particular, we discuss the important concept of an evaluation  map.

In Section~\ref{sec:associative}, we consider the case where $\fra$ is an \emph{associative} algebra.  In particular, we give a classification of the irreducible finite-dimensional representations of an equivariant map associative algebra, a result which has not previously appeared in the literature.

In Section~\ref{sec:Lie}, we summarize the most important results concerning equivariant map \emph{Lie} algebras, the most well-studied class of EMAs.  In the case where the target $\fra$ is a finite-dimensional Lie algebra, we recall the classification of the irreducible finite-dimensional representations, the description of the extensions between such representations and the corresponding block decomposition of the category of finite-dimensional representations.  We also review the important concepts of global and local Weyl modules.  We conclude Section~\ref{sec:Lie} with a summary of what is known when the target $\fra$ is not a finite-dimensional Lie algebra.  In particular, we discuss the classification of certain representations in the case that $\fra$ is the Virasoro algebra and the case that $\fra$ is a finite-dimensional basic classical Lie \emph{super}algebra.

Section~\ref{sec:open} is devoted to an overview of open problems in the field of EMAs.  While our list of problems is by no means exhaustive, we have made an attempt to present a wide range of what we consider to be some of the most interesting and important ones.  We begin in Section~\ref{subsec:new-EMAs} with a discussion of the issue of considering \emph{new} types of EMAs.  While the definition of an EMA is quite general, it can be generalized still further, as described there.  In Section~\ref{subsec:prob-arbit} we discuss several important open questions that apply to the general EMA setting.  Finally, in Section~\ref{subsec:open-EMLA} we focus on the equivariant map Lie algebras and describe some of the next steps towards establishing a comprehensive representation theory of these algebras.

%%%%%%%%%%%%%%%%%%%%%%%%%%%%
\subsection*{Basic notation}
%%%%%%%%%%%%%%%%%%%%%%%%%%%%

We let $k$ be an algebraically closed field which is of arbitrary characteristic in \S\S\ref{sec:general} and~\ref{sec:associative} and of characteristic zero in the following sections.  We set $k^\times = k \setminus \{0\}$.  Vector spaces and tensor products are over $k$ unless otherwise indicated.

An \emph{algebra} is a vector space $A$ over $k$ together with a $k$-bilinear map $A \times A \to A$, $(a,b) \mapsto ab$, called the \emph{product of $A$}.  For general algebras we write the product by juxtaposition while for Lie algebras we use the traditional notation $[a,b]$.  The direct product of two algebras $A$ and $B$ is denoted $A\boxplus B$ to distinguish it from the direct sum of two vector spaces.  We will use the terms ``module'' and ``representation'' interchangeably.

For schemes, we use the terminology of \cite{EH}.  In particular, an affine scheme $X$ is the (prime) spectrum of a commutative associative unital $k$-algebra $A$. For an arbitrary scheme $X$, we set $A = \mathcal{O}_X(X)$.  Recall (\cite[p.~45]{EH}) that $x \in X$ is a \emph{$k$-rational point of $X$} if its residue field $A/\sfm_x \cong k$ where $\mathsf{m}_x$ is the ideal of $A$ corresponding to $x$.
We let $X_\rat$ denote the set of $k$-rational points of $X$. If $A$ is finitely generated, equivalently $X=\Spec A$ is an affine scheme of finite type, the rational points correspond exactly to the maximal ideals of $A$, i.e., $X_\rat = \maxSpec A$. We say that $X$ is an \emph{affine variety} if $A$ is finitely generated and reduced, in which case we identify $X$ with the maximal spectrum of $A$ (for most readers this will be the most interesting case). For example, a finite-dimensional vector space $V$ is also equipped with the structure of an affine variety, the affine $n$-space $\AA^n$ for $n= \dim_k V$.

The symbol $\Ga$ will always denote a finite group. Although it is not necessary in the beginning, we suppose from the start that the order $|\Ga|$ of $\Ga$ is not divisible by the characteristic of $k$.  (Note that this imposes no condition if $k$ is of characteristic zero.)  If $\Ga$ acts on a set $X$ we let $\Ga_x = \{\ga \in \Ga : \ga \cdot x = x\}$ be the isotropy group of $x\in X$. If $\Ga$ acts on a vector space $V$ we put $V^\Ga = \{v\in V : \ga \cdot v = v \hbox{ for all } \ga \in \Ga \}$.

%%%%%%%%%%%%%%%%%%%%%%%%%%%%%%%%%%%%%%%%%%%%%%%%%%%%%%%%%%%%%
%
\section{General equivariant map algebras}\label{sec:general}
%
%%%%%%%%%%%%%%%%%%%%%%%%%%%%%%%%%%%%%%%%%%%%%%%%%%%%%%%%%%%%%

In this section we will define general equivariant map algebras.  By ``general'' we mean ``not necessarily Lie'' (as opposed to the setting of \cite{NSS,NS}).  We work within the following framework:
\begin{enumerate}
  \item $X=\Spec A$ is an affine scheme over $k$.

  \item $\fra$ is a finite-dimensional $k$-algebra, without any further assumptions. We view $\fra$ also as an affine variety.

  \item $\Ga$ is a finite group acting on the scheme $X$, equivalently on the algebra $A$, and on the algebra $\fra$ by automorphisms of the respective structure.
\end{enumerate}

\begin{defin}[Map algebra] \label{Defmap}
  We denote by $M(X,\fra)$ the algebra of regular functions on $X$ with values in $\fra$, called the \emph{untwisted map algebra} or the \emph{algebra of currents}.  Therefore
  \begin{equation} \label{eq:defmap1}
    M(X ,\fra) =\fra \ot A,
  \end{equation}
  with product given by
  \begin{equation} \label{eq:defmap2}
    ({\tt a}_1 \ot a_1) ({\tt a}_2 \ot a_2) = ({\tt a_1a_2}) \ot (a_1 a_2)
  \end{equation}
  for ${\tt a}_i \in \fra$ and $a_i \in A$.  Viewing elements of $M(X,\fra)$ as maps from $X$ to $\fra$, the product of $\al_1, \al_2 \in M(X,\fra)$, is given by $(\al_1 \al_2)(x) = \al_1(x) \al_2(x)$ for $x\in X$.  Although the product $\fra \ot A$ above is an $A$-algebra, we will always view it as a $k$-algebra.
\end{defin}

\begin{rems}
  The multiplication rule \eqref{eq:defmap2} defines of course a $k$-algebra structure on $\fra \ot A$ for an arbitrary $\fra$.  However, with the exception of the map Virasoro algebra discussed in Section~\ref{subsec:super-Virasoro}, we always assume that $\fra$ is finite-dimensional.

  If $\fra$ is an algebra in a class of algebras defined by multilinear identities (a homogeneous variety of algebras in the sense of nonassociative algebras as in \cite{zsss}, e.g.\ Lie algebras or associative algebras), then $M(X,\fra)$ belongs to the same class of algebras.

  If $A=k^n := k \boxplus \cdots \boxplus k$ ($n$ factors), equivalently, $X$ is an algebraic variety consisting of $n$ points, we have $M(X, \fra) \cong \fra^n := \fra \boxplus \cdots \boxplus \fra$ ($n$ factors). If $A= k[t^{\pm 1}]$ is the Laurent polynomial ring, so that $X = k^\times$, the algebra $M(k^\times, \fra)$ is called the \emph{(untwisted) loop algebra of $\fra$}.
\end{rems}

\begin{defin}[Equivariant map algebra]\label{defEMA}
  Observe that $\Ga$ acts on $M(X, \fra)$ by automorphisms: For $\ga\in \Ga$ and $\alpha \in M(X,\fra)$ the map $\ga\cdot \alpha$ is defined by $ (\ga \cdot \alpha)(x) =  \ga \cdot \big( \alpha(\ga^{-1}\cdot x)\big)$ for $x \in X$, i.e., $\ga \cdot (\tta \ot a) = (\ga \cdot \tta ) \ot (\ga \cdot a)$ for $\mathtt{a} \in \fra, a \in A$. We define $M(X,\fra)^\Ga$ to be
  the set of fixed points under this action. That is,
  \[
    \frM :=  M(X,\fra)^\Ga = \{\alpha \in M(X,\fra)  : \alpha(\ga \cdot x) = \ga \cdot   \alpha(x) \, \,  \forall\ x \in X,\ \ga \in \Ga\}
  \]
  is the subalgebra of $M(X,\fra)$ consisting of $\Ga$-equivariant maps from $X$ to $\fra$. We call $M(X,\fra)^\Ga$ an \emph{equivariant map algebra} (or \emph{EMA}).  We note that in general the data $(X,\fra, \Ga)$ are not uniquely determined by $\frM$, see in this respect Section~\ref{prob:isomorp}.
\end{defin}

 As was the case for the map algebras, it is clear that $\frM$ inherits any multilinear identities satisfied by $\fra$. For example, $\frM$ is associative if $\fra$ is associative, etc.\ (we trust the reader will fill in here his or her favorite class of algebras).

Let $\Xi$ be the set of isomorphism classes of irreducible representations of $\Ga$. Since $\Ga$ acts completely reducibly on $\fra$ and $A$, each algebra has a unique decomposition into isotypic components.  Thus we have $\fra = \bigoplus_{\xi \in \Xi} \fra_\xi$ where the isotypic component $\fra_\xi$ is the sum of all $\Ga$-submodules of $\fra$ of type $\xi$.  Similarly $A= \bigoplus_{\xi \in \Xi} A_\xi$. We denote by $0$ the class of the trivial representation and by $-\xi $ the equivalence class of the representation dual to the representation in $\xi$. Since $(\fra_\xi \ot A_\ta)^\Ga = \{0\}$ unless $\tau = - \xi$ we get
\begin{equation} \label{eq:frm-general}
  \frM  = \ts \bigoplus_{\xi \in \Xi} (\fra_\xi \ot A_{-\xi})^\Ga \supseteq \fra_0 \ot A_0.
\end{equation}
One of the difficulties in studying equivariant map algebras is that it is in general difficult to understand the elements in $(\fra_\xi  \ot A_{-\xi})^\Ga$. The situation is (somewhat) easier in the case where $\Ga$ is abelian, see Example~\ref{eg:abelian}.

Equivariant map algebras were introduced in \cite{NSS} for Lie algebras $\fra$, but it was mentioned there that some of the preliminary results of \cite{NSS} hold for arbitrary $\fra$. We will come back to this below. We first discuss some examples so that the reader can appreciate the full scope of the definition.  We focus on examples where $\Ga$ is abelian. The interested reader can find an example with a non-abelian $\Ga$ in \cite[Ex.~3.13]{NSS}.

\begin{example}[$\Ga$ abelian]\label{eg:abelian}
  If $\Ga$ is abelian, the isomorphism classes of irreducible representations can be identified with the character group of $\Ga$ which, for simplicity, we also denote by $\Xi$. This is an abelian group (non-canonically isomorphic to $\Ga$), whose group operation we will write additively. The isotypic component $A_\xi $ is given by $A_\xi = \{a\in A: \ga \cdot a = \xi(\ga) a \hbox{ for all } \ga \in \Ga\}$, whence $(\fra_\xi \ot A_{-\xi})^\Ga = \fra_\xi \ot A_{-\xi}$. The decomposition~\eqref{eq:frm-general} now reads
  \begin{equation} \label{eq:xrat0}
    \frM = \ts \bigoplus_{\xi \in \Xi} \, \fra_\xi \ot A_{-\xi}.
  \end{equation}
  Note that the decomposition \eqref{eq:xrat0} is a $\Xi$-grading of the algebra $\frM$. The next example introduces an important special case.
\end{example}

\begin{example}[Multiloop algebras] \label{eg:multiloop}
  Fix positive integers $n, m_1, \dots, m_n$.  Let
  \[
    \Gamma = \langle \gamma_1,\dots, \gamma_n : \gamma_i^{m_i}=1,\ \gamma_i \gamma_j = \gamma_j \gamma_i,\ \forall\ 1 \le i,j \le n \rangle \cong (\Z/m_1\Z) \times \dots \times (\Z/m_n\Z)
  \]
  and suppose that $\Gamma$ acts on $\fra$ by automorphisms. Note that this is equivalent to specifying commuting automorphisms $\sigma_i$, $i=1,\dots,n$, of $\fra$ such that $\sigma_i^{m_i}=\Id$. For $i = 1,\dots, n$, let $\xi_i$ be a primitive $m_i$-th root of unity. Let $X=(k^\times)^n$ and define an
  action of $\Gamma$ on $X$ by
  \[
    \gamma_i \cdot (z_1, \dots, z_n) = (z_1, \dots, z_{i-1}, \xi_i z_i, z_{i+1}, \dots, z_n).
  \]
  Then
  \begin{equation} \label{eq:twisted-multiloop-def}
      M(\fra,\sigma_1,\dots,\sigma_n,m_1,\dots,m_n) := M(X,\fra)^\Gamma
  \end{equation}
  is the \emph{multiloop algebra} of $\fra$ relative to $(\sigma_1, \dots, \sigma_n)$ and $(m_1, \ldots, m_n)$. General multiloop algebras are studied in detail in \cite{abp2} and \cite{abp2.5}.
\end{example}

\begin{defin}[Evaluation map]\label{def:evalmap}
  For $x\in X_\rat$ with isotropy group $\Ga_x$ we set
  \[
    \fra^x = \{ \tta \in \fra : \ga \cdot \tta = \tta \hbox{ for all } \ga \in \Ga_x\}
  \]
  and note that $\fra^x$ is, in general, a proper subalgebra of $\fra$. We denote by $X_*$ the set of finite subsets $\bx \subseteq X_\rat$ for which $(\Ga \cdot x) \cap (\Ga \cdot x') = \emptyset$ for distinct $x,x'\in \bx$.  For $\bx \in X_*$ we define $\fra^\bx = \boxplus_{x\in \bx} \fra^x$. The
  \emph{evaluation map}
  \[
    \ev_\bx \colon \frM \to \fra^\bx , \quad \ev_\bx(\al)= \big( \al(x) \big)_{x \in \bx}
  \]
  is a surjective algebra homomorphism (see~\cite[Cor.~4.6]{NSS}).  This has important consequences for the representation theory of $\frM$.  When $\bx = \{x\}$ is a singleton, we will often denote the evaluation map by $\ev_x$.
\end{defin}

For the next theorem we recall that an ideal of an arbitrary $k$-algebra $B$ is a subspace $I$ of $B$ such that $xb \in I$ and $bx \in I$ for all $x\in I$ and $b\in B$. One calls $B$ \emph{simple} if $B$ is not the zero algebra and if every ideal $I$ of $B$ is trivial: $I=\{0\}$ or $I=B$.

\begin{prop} \label{prop:fact}
  Let $\frK$ be an ideal of $\frM$ such that the quotient algebra $\frM/\frK$ is finite-dimensional and simple. Then there exists a $k$-rational point $x\in X$ such that the canonical epimorphism $\pi : \frM \to \frM/\frK$ factors through the evaluation map $\ev_x : \frM \to \fra^x:$
  \[
    \xymatrix{
      \frM \ar@{->>}[rr]^\pi \ar@{->>}[rd]_{\ev_x} && \frM/\frK \\ & \fra^x \ar@{->>}[ru]
    }
  \]
\end{prop}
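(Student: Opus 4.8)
The plan is to extract a single $k$-rational point from a \emph{central character} on the simple quotient, and then to show that evaluation at that point already annihilates $\frK$. Write $B=\frM/\frK$ and let $\pi\colon\frM\to B$ be the quotient map. Since $B$ is simple, $B^2$ is an ideal, so $B^2=0$ or $B^2=B$; I will treat the generic case $B^2=B$ (the exceptional case $B^2=0$ forces $\dim_k B=1$ with zero product and is handled by hand). The invariant ring $A^\Ga$ acts on $\frM$ by the multipliers $T_f\colon \tta\ot a\mapsto \tta\ot fa$ for $f\in A^\Ga$; because $f$ is $\Ga$-invariant these commute with left and right multiplication, i.e.\ $T_f$ lies in the centroid $\cent(\frM)$. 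First I would check that each $T_f$ preserves $\frK$, so that the action descends to $B$. Indeed $T_f\frK$ is again an ideal, hence $T_f\frK+\frK$ is an ideal containing $\frK$ and by simplicity equals $\frK$ or $\frM$. In the second case $\pi(T_f\frK)=B$, while the centroid identities give $(T_f\kappa)(T_f\kappa')=T_f^2(\kappa\kappa')=(T_f^2\kappa)\,\kappa'$, so that $\pi(T_f\kappa)\pi(T_f\kappa')=\pi(T_f^2\kappa)\,\pi(\kappa')=0$ for all $\kappa,\kappa'\in\frK$ (using $\kappa'\in\frK$); thus $B^2=(\pi(T_f\frK))^2=0$, a contradiction. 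Hence $T_f\frK\subseteq\frK$ for every $f$.

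Consequently $A^\Ga$ acts on the simple finite-dimensional algebra $B$ through its centroid, and since $k$ is algebraically closed $\cent(B)=k\,\Id$. This yields a $k$-algebra character $\chi\colon A^\Ga\to k$ with $T_f|_B=\chi(f)\,\Id$; its kernel $\mathfrak{n}$ is a maximal ideal of $A^\Ga$ with $A^\Ga/\mathfrak{n}=k$, and $\mathfrak{n}\frM\subseteq\frK$. Next I would lift $\mathfrak{n}$ to $X$: as $\Ga$ is finite, each $a\in A$ is a root of the monic $\prod_{\ga\in\Ga}(t-\ga\cdot a)\in A^\Ga[t]$, so $A$ is integral over $A^\Ga$, and lying-over produces a maximal ideal $\sfm\ideal A$ with $\sfm\cap A^\Ga=\mathfrak{n}$. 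Its residue field is algebraic over $k$, hence equals $k$, so $\sfm=\sfm_x$ for some $x\in X_\rat$ and $\chi(f)=f(x)$ for $f\in A^\Ga$. In particular $\mathfrak{n}\frM$ lies in both $\frK$ and $\Ker(\ev_x)$.

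It remains to prove $\Ker(\ev_x)\subseteq\frK$; granting this, the surjectivity of the evaluation map noted above gives the desired factorization $\pi=(\text{a quotient map of }\fra^x)\circ\ev_x$. Passing to $\bar\frM:=\frM/\mathfrak{n}\frM$ and using that averaging by $e=\tfrac1{|\Ga|}\sum_{\ga\in\Ga}\ga$ is exact (since $|\Ga|$ is invertible in $k$), I identify $\bar\frM\cong(\fra\ot\bar A)^\Ga$, where $\bar A=A/\mathfrak{n}A$ is the fibre of $X\to\Spec A^\Ga$ over $\mathfrak{n}$. The maximal ideals of the Artinian ring $\bar A$ are exactly the points of the orbit $\Ga\cdot x$, so its nilradical $J$ is nilpotent, and under this identification $\overline{\Ker(\ev_x)}=(\fra\ot J)^\Ga$, a nilpotent ideal of $\bar\frM$. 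Its image in $B$ is a nilpotent ideal of the simple algebra $B$, hence zero; therefore $\Ker(\ev_x)\subseteq\frK$.

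The hard part will be this last step: controlling the gap between being \emph{annihilated by the central character} $\mathfrak{n}$ and \emph{vanishing at the point} $x$. Evaluation at one point only sees the reduced structure of the fibre $\bar A$ over $\Ga\cdot x$ (and only the $\Ga_x$-invariants $\fra^x$ of $\fra$), whereas $\mathfrak{n}\frM$ remembers the full, possibly non-reduced and ramified, scheme-theoretic fibre; the discrepancy is precisely the nilpotent ideal $(\fra\ot J)^\Ga$, and it is the simplicity of $B$ that forces this nilpotent part to vanish. For general, non-finitely-generated $A$ one first replaces $A$ by a finitely generated $\Ga$-stable subalgebra carrying the finitely many structure constants of the finite-dimensional $B$, thereby reducing to the Artinian situation used above.
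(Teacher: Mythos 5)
Your main line of argument is sound, and in spirit it matches the proof the paper points to: the paper itself offers no argument beyond citing Proposition~5.2 and Remark~5.3 of \cite{NSS}, and the natural route there is exactly yours --- $A^\Ga$ acts on $\frM$ through the centroid, the finite-dimensional simple quotient $B$ has centroid $k\,\Id$ (this is where $k$ algebraically closed enters), giving a character $\chi$ with kernel $\frn$ and $\frn\frM \subseteq \frK$; integrality of $A$ over $A^\Ga$ plus lying over produces $x \in X_\rat$; and simplicity kills the nilpotent discrepancy between the scheme-theoretic fiber and the reduced orbit. Your verification that $T_f\frK \subseteq \frK$ (moving one $T_f$ across the product, $(T_f\kappa)(T_f\kappa') = (T_{f^2}\kappa)\kappa' \in \frK$) is correct, as is the identification $\overline{\Ker \ev_x} = (\fra \ot J)^\Ga$ --- though note that the latter, and your claim that the maximal ideals of $\bar A$ are exactly the points of $\Ga \cdot x$, silently use the classical fact that a finite group acts transitively on the fibers of $\Spec A \to \Spec A^\Ga$ (Bourbaki, Commutative Algebra V, \S 2); this deserves at least a citation.

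There are, however, two genuine soft spots. First, the aside that the case $B^2 = 0$ ``is handled by hand'' is wrong: for a one-dimensional zero-product quotient the proposition is simply \emph{false}. Take $\fra$ one-dimensional with zero product, $\Ga$ trivial, $A = k[t]$, and $\frK$ the kernel of the linear form $f \mapsto f(0) + f(1)$; then $\frM = A$ with zero product, every hyperplane is an ideal with ``simple'' quotient, but $\Ker \ev_x = \sfm_x$ has codimension one, so factoring through $\ev_x$ would force $\frK = \sfm_x$, which fails for this $\frK$. So one must read ``simple'' as entailing $B^2 \ne 0$ (the convention of \cite{zsss}, and what \cite{NSS} assume in the Lie case, where simple means nonabelian); your $B^2 = B$ argument is then the whole proof. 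Second, and more seriously, your closing reduction of general $A$ to a finitely generated $\Ga$-stable subalgebra $A'$ does not work as stated: the rational point produced by the finitely generated case lives on $\Spec A'$ and need not lift to a $k$-rational point of $X$ (already $A = k[t^{\pm 1}] \supset A' = k[t]$ and the point $t=0$ show this), and even at a common point the containment $\Ker(\ev_x) \cap \frM' \subseteq \frK$ does not yield $\Ker(\ev_x) \subseteq \frK$, since $\frK \not\subseteq \Ker \ev_x$ and the decomposition $\frM = \frM' + \frK$ does not respect $\ev_x$. Fortunately the Artinian detour is unnecessary: since $B$ is finite-dimensional, the image of $(\fra \ot J)^\Ga$ in $B$ is spanned by the images of finitely many elements $\beta_i = \sum_j \tta_{ij} \ot c_{ij}$ with each $c_{ij} \in J$ nilpotent; the ideal $J_0 \ideal \bar A$ generated by the finitely many $c_{ij}$ is then nilpotent, say $J_0^N = 0$, so every $N$-fold product (in any association) of elements of this image vanishes, whereas if the image were all of $B$ then $B = B^2$ would force the span of $N$-fold products to be $B \ne 0$. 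This one-line repair makes your argument valid for arbitrary, not necessarily finitely generated, $A$ --- which is the generality in which the proposition is stated.
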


\begin{proof}
  This is Proposition~5.2 together with Remark~5.3 in \cite{NSS}.
\end{proof}

%%%%%%%%%%%%%%%%%%%%%%%%%%%%%%%%%%%%%%%%%%%%%%
%
\section{Equivariant map associative algebras} \label{sec:associative}
%
%%%%%%%%%%%%%%%%%%%%%%%%%%%%%%%%%%%%%%%%%%%%%%

In this section, we assume that $\fra$ is a unital associative algebra and use Proposition~\ref{prop:fact} to obtain a new description of representations of equivariant map algebras $M(X,\fra)^\Ga$.  As pointed out in Section~\ref{sec:general}, in this case $M(X,\fra)^\Ga$ is an associative algebra, so that it makes sense to speak of representations. We recall that a representation of an associative algebra $B$ is an algebra homomorphism $\rho \co B \to \End_k V$ where $\End_k V$ denotes the associative $k$-algebra of endomorphisms of a $k$-vector space $V$.

\begin{theo} \label{th:rep-ass}
  Let $\rho : M(X,\fra)^\Ga \to \End_k V$ be an irreducible finite-dimensional representation of\/  $\frM= M(X,\fra)^\Ga$. Then there exists a $k$-rational point $x\in X$ such that $\rho$ factors as
  \[
    \xymatrix@1{
      \frM \ar@{->>}[r]^{\ev_x} & \fra^x \ar@{->>}[r]^>>>>>{\rho_x} & \End_k V
    }
  \]
  where $\rho_x$ is an irreducible finite-dimensional representation of the finite-dimensional algebra $\fra^x$.

  Conversely, for any $x\in X_\rat$ and any irreducible representation $\rh_x \colon \fra^x \to \End_k V$ the map $\rh_x \circ \ev_x$ is an irreducible representation of $\frM$.
\end{theo}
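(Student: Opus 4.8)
The plan is to prove the forward direction using Proposition~\ref{prop:fact}, which already does most of the heavy lifting, and then dispatch the converse by elementary surjectivity arguments. Let me describe each direction in turn.

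For the forward direction, I would start by analyzing the image $\rho(\frM) \subseteq \End_k V$. Since $V$ is finite-dimensional and $\rho$ is an irreducible representation of an associative algebra, the image $\rho(\frM)$ acts irreducibly on $V$; by the Jacobson density theorem (over the algebraically closed field $k$, with $\dim_k V < \infty$), this forces $\rho(\frM) = \End_k V \cong M_{\dim V}(k)$, which is a finite-dimensional simple associative algebra. Setting $\frK = \ker \rho$, the first isomorphism theorem gives $\frM/\frK \cong \rho(\frM) = \End_k V$, so $\frM/\frK$ is finite-dimensional and simple. This is precisely the hypothesis of Proposition~\ref{prop:fact}, which then yields a $k$-rational point $x \in X$ and a factorization of the canonical projection $\pi \colon \frM \to \frM/\frK$ through $\ev_x \colon \frM \to \fra^x$, say $\pi = \psi \circ \ev_x$ for some surjective homomorphism $\psi \colon \fra^x \to \frM/\frK$. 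Composing with the isomorphism $\frM/\frK \xrightarrow{\sim} \End_k V$ induced by $\rho$, I obtain $\rho_x \colon \fra^x \to \End_k V$ with $\rho = \rho_x \circ \ev_x$. It remains to check that $\rho_x$ is irreducible: a submodule of $V$ invariant under $\rho_x(\fra^x)$ is invariant under $\rho_x(\ev_x(\frM)) = \rho(\frM)$, and irreducibility of $\rho$ gives the claim. Since $\fra^x$ is a subalgebra of the finite-dimensional $\fra$, it is finite-dimensional, completing this direction.

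For the converse, suppose $x \in X_\rat$ and $\rho_x \colon \fra^x \to \End_k V$ is an irreducible representation. The evaluation map $\ev_x \colon \frM \to \fra^x$ is surjective by Definition~\ref{def:evalmap} (it is the singleton case $\bx = \{x\}$ of the surjective homomorphism $\ev_\bx$). Then $\rho_x \circ \ev_x$ is an algebra homomorphism $\frM \to \End_k V$, hence a representation. Because $\ev_x$ is surjective, the image of $\rho_x \circ \ev_x$ equals the image of $\rho_x$, so the two representations have exactly the same invariant subspaces of $V$; irreducibility of $\rho_x$ therefore transfers directly to $\rho_x \circ \ev_x$.

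The main obstacle is verifying the hypothesis ``$\frM/\frK$ finite-dimensional and simple'' needed to invoke Proposition~\ref{prop:fact}; everything downstream is then formal. The finite-dimensionality is immediate since $\dim_k(\frM/\ker\rho) = \dim_k \rho(\frM) \le (\dim_k V)^2 < \infty$, but the simplicity genuinely requires the density theorem to identify $\rho(\frM)$ with the full matrix algebra $\End_k V$—this is where algebraic closedness of $k$ and finite-dimensionality of $V$ are essential. I would flag that without the density argument one cannot conclude $\frM/\ker\rho$ is simple from irreducibility of $\rho$ alone. Once Proposition~\ref{prop:fact} applies, the factorization and the irreducibility transfer are routine.
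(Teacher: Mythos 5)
Your proposal is correct and takes essentially the same route as the paper's own proof: the paper invokes Burnside's theorem to conclude that $\frM/\Ker \rho$ is finite-dimensional and simple (your Jacobson density argument over the algebraically closed field $k$ is precisely the standard proof of that theorem), then applies Proposition~\ref{prop:fact}, and settles the converse via surjectivity of $\ev_x$. Your additional verifications (irreducibility of $\rho_x$ and finite-dimensionality of $\fra^x$) are routine details the paper leaves implicit.
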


\begin{proof}
  Let $\frK = \Ker \rho$. Then $\overline{\frM} = \frM/\frK$ is a finite-dimensional associative algebra with a faithful irreducible representation $\bar \rho$ satisfying $\rho = \bar \rho \circ \pi$ for $\pi \colon \frM \to \frM /\frK$ the canonical projection. By Burnside's Theorem (see, for
  example, \cite[\S4.5]{jake:BAII}), $\bar \rho$ is an isomorphism, in particular $\overline{\frM}$ is simple.   Thus, by Proposition~\ref{prop:fact}, there exists a rational point $x$ such that $\rho$ factors through $\ev_x$.

  That, conversely, $\rho_x \circ \ev_x$ is an irreducible representation of $\frM$ follows from the surjectivity of $\ev_x$.
\end{proof}

We call the representations $\rho_x \circ \ev_x = \rho_x \ev_x$ appearing in Theorem~\ref{th:rep-ass} \emph{single point evaluation representations}. The theorem can then be summarized by saying that \emph{the irreducible finite-dimensional representations of $M(X,\fra)^\Ga$ are precisely the  single point evaluation representations.}

We now turn to describing the isomorphism classes of these representations.  We will use an approach that will reappear in the setting of Lie algebras.

For $x\in X_\rat$, let $\mathcal{R}_x$ denote the set of isomorphism classes of irreducible finite-dimensional representations of $\fra^x$, and put $\mathcal{R}_X=\bigsqcup_{x \in X_\rat} \mathcal{R}_x$ (disjoint union).  It is convenient to view the trivial zero-dimensional representation as an irreducible representation; we will use $0$ to denote its class in $\mathcal{R}_x$.  Note that $\Gamma$ acts on $\mathcal{R}_X$ by
\[
  \Gamma \times \mathcal{R}_X \to \mathcal{R}_X,\quad (\gamma,[\rho]) \mapsto \gamma  \cdot [\rho] := [\rho \circ \gamma^{-1}] \in \mathcal{R}_{\gamma \cdot x},
\]
where $[\rho] \in \mathcal{R}_x$ is the isomorphism class of a representation $\rho$ of $\fra^x$. For isomorphic irreducible representations $\rho$ and $\rho'$ of $\fra^x$, the evaluation representations $\ev_x \rho$ and $\ev_x \rho'$ are isomorphic representations of $\frM$. Therefore, for $[\rho] \in \mathcal{R}_x$, we can define $\ev_x [\rho]$ to be the isomorphism class of $\ev_x \rho$, and this is independent of the representative $\rho$.

We let $\cE^\Gamma_\mathrm{single}$ denote the set of $\Gamma$-equivariant functions $\psi : X_\rat \to \mathcal{R}_X$ such that $\psi(x) \in \mathcal{R}_x$ for all $x \in X_\rat$ and $\Supp \psi := \{x \in X_\rat: \psi(x) \ne 0\}$ is contained in a single $\Ga$-orbit.  For $0\ne \psi \in \cE^\Gamma_\mathrm{single}$ we define $\ev^\Gamma_\psi = \ev_x [\psi(x)]$ where $\Ga \cdot \{x\} = \Supp \psi$. One can show as in \cite[Lem.~4.13]{NSS}, that $\ev^\Gamma_\psi$ is independent of the choice of $x\in \Supp \psi$. We let $\ev^\Gamma_\psi$ be the zero-dimensional representation if $\psi = 0$.  Thus $\psi \mapsto \ev^\Gamma_\psi$ defines a map $\ev^\Gamma \co \cE^\Gamma_\mathrm{single} \to \scS$, where $\scS$ denotes the set of isomorphism classes of irreducible finite-dimensional representations of $\frM$.

\begin{cor} \label{cor:assoc-enumeration}
  The map $\ev^\Gamma\co \cE^\Gamma_\mathrm{single} \to \scS$ is a bijection.  In other words, $\cE^\Gamma_\mathrm{single}$ enumerates the isomorphism classes of irreducible finite-dimensional representations of $\frM$.
\end{cor}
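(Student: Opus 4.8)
The plan is to establish that $\ev^\Gamma$ is both surjective and injective, taking Theorem~\ref{th:rep-ass} and the surjectivity of the evaluation maps (Definition~\ref{def:evalmap}) as the principal inputs. Throughout I write $\Supp\psi = \Ga\cdot x$ for a nonzero $\psi\in\cE^\Gamma_\mathrm{single}$ and pick a representative $\rho$ of the class $\psi(x)\in\mathcal{R}_x$, so that $\ev^\Gamma_\psi = \ev_x[\rho]$.

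For surjectivity I would start from an arbitrary class in $\scS$. The zero-dimensional representation is $\ev^\Gamma_0$, so I may assume the class is positive-dimensional. By Theorem~\ref{th:rep-ass} it equals $\ev_x[\rho_x]$ for some $x\in X_\rat$ and some irreducible $\rho_x\co\fra^x\to\End_k V$; since the representation is positive-dimensional, $[\rho_x]\ne 0$ in $\mathcal{R}_x$. I then define $\psi$ to be $0$ off the orbit $\Ga\cdot x$ and $\psi(\ga\cdot x) = \ga\cdot[\rho_x]$ on it. The one thing to verify is that this prescription is unambiguous along the orbit, i.e.\ that $\ga\cdot[\rho_x] = [\rho_x]$ whenever $\ga\in\Ga_x$. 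This is immediate from the definitions: $\Ga_x$ fixes $\fra^x$ pointwise, so $\rho_x\circ\ga^{-1} = \rho_x$ for $\ga\in\Ga_x$, and hence $\Ga_x$ acts trivially on $\mathcal{R}_x$. The resulting $\psi$ lies in $\cE^\Gamma_\mathrm{single}$ and satisfies $\ev^\Gamma_\psi = \ev_x[\rho_x]$.

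For injectivity, suppose $\ev^\Gamma_\psi = \ev^\Gamma_{\psi'}$. If one of $\psi,\psi'$ is $0$, then the common value is the zero-dimensional representation, forcing the other to be $0$ as well (a nonzero $\psi'$ yields a positive-dimensional $\ev_{x'}\rho'$). So assume both are nonzero, with $\Supp\psi = \Ga\cdot x$ and $\Supp\psi' = \Ga\cdot x'$ and representatives $\rho,\rho'$; thus $\ev_x\rho\cong\ev_{x'}\rho'$. The crucial step is to show the two orbits coincide. If $\Ga\cdot x\ne\Ga\cdot x'$, then $\bx=\{x,x'\}\in X_*$, so by Definition~\ref{def:evalmap} the map $\ev_\bx\co\frM\to\fra^x\boxplus\fra^{x'}$ is surjective; choosing $\al\in\frM$ with $\al(x)=0$ and $\al(x')=1$ (the unit of $\fra$, which lies in $\fra^{x'}$ since $\Ga$ acts by unital automorphisms), I find $\al\in\Ker(\ev_x\rho)$ while $\rho'(\al(x'))=\rho'(1)=\Id_{V'}\ne 0$ (as $\rho'(1)$ is an idempotent acting as the identity on the nonzero irreducible module $V'$), so $\al\notin\Ker(\ev_{x'}\rho')$. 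Since isomorphic representations have equal kernels, this contradicts $\ev_x\rho\cong\ev_{x'}\rho'$. Hence $\Ga\cdot x=\Ga\cdot x'$, and we may take $x=x'$.

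It then remains to deduce $\psi=\psi'$ from $\ev_x\rho\cong\ev_x\rho'$. Here I would transport any intertwiner $T\co V\to V'$ of $\ev_x\rho$ and $\ev_x\rho'$ through the surjection $\ev_x$: since $\{\al(x):\al\in\frM\} = \fra^x$, the relation $T\,\rho(\al(x)) = \rho'(\al(x))\,T$ for all $\al$ gives $T\rho(\tta)=\rho'(\tta)T$ for every $\tta\in\fra^x$, so $T$ intertwines $\rho$ and $\rho'$ and $[\rho]=[\rho']$ in $\mathcal{R}_x$. Thus $\psi(x)=\psi'(x)$, and $\Ga$-equivariance propagates this equality over the whole orbit while both functions vanish elsewhere, giving $\psi=\psi'$. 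I expect the main obstacle to be the cross-orbit step of injectivity: it is the only place that genuinely uses the global structure of $\frM$ (through the surjectivity of the two-point evaluation) together with the hypothesis that $\fra$ is unital, the unit being exactly what separates the two evaluation representations.
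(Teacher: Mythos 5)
Your proof is correct, but it is genuinely more self-contained than the paper's, which disposes of the corollary in one line: surjectivity from Theorem~\ref{th:rep-ass}, and injectivity ``as in \cite[Prop.~4.15]{NSS}'' --- an argument written for the Lie-algebra setting, where one recovers the support orbit by analyzing the kernels of evaluation representations. Your surjectivity argument matches the paper's in substance, with the welcome extra care of checking that $\psi(\ga\cdot x) = \ga\cdot[\rho_x]$ is unambiguous on the orbit; this is indeed immediate since $\Ga_x$ fixes $\fra^x$ pointwise by the very definition of $\fra^x$. Where you genuinely diverge is injectivity: instead of importing the support machinery of \cite{NSS}, you exploit the standing hypothesis of Section~\ref{sec:associative} that $\fra$ is \emph{unital} --- automorphisms fix the unit, so $1 \in \fra^{x'}$, and the surjectivity of the two-point evaluation $\ev_\bx \co \frM \to \fra^x \boxplus \fra^{x'}$ for $\bx = \{x,x'\} \in X_*$ produces an element separating the two kernels, forcing the orbits to agree; you then push the intertwiner through the single-point surjection $\ev_x$ to get $[\rho]=[\rho']$. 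This is a clean associative-specific shortcut not available in the Lie case (where there is no unit), and it makes the corollary independent of \cite[Prop.~4.15]{NSS}. The only point deserving a remark is your step $\rho'(1) = \Id_{V'}$: this uses the standard convention that an irreducible module $V'$ satisfies $\fra^{x'} V' \ne 0$ (so that the idempotent $\rho'(1)$, whose image and kernel are submodules, cannot vanish) --- the same convention already implicit in the paper's appeal to Burnside's theorem in the proof of Theorem~\ref{th:rep-ass}, so this is consistent with the surrounding text.
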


\begin{proof}
  Surjectivity follows from Theorem~\ref{th:rep-ass} and injectivity can be be shown as in \cite[Prop.~4.15]{NSS}.
\end{proof}

Note that in the associative case, all irreducible finite-dimensional representations are \emph{single} orbit evaluation representations, in contrast to the Lie algebra case (see Theorem~\ref{thm:Lie}).  This is not surprising since it is not clear how to define multi-orbit evaluation representations in the associative case unless one has a bialgebra structure on the target $\fra$.

%%%%%%%%%%%%%%%%%%%%%%%%%%%%%%%%%%%%%%
%
\section{Equivariant map Lie algebras} \label{sec:Lie}
%
%%%%%%%%%%%%%%%%%%%%%%%%%%%%%%%%%%%%%%

In this section we consider the most well studied case of equivariant map algebras: the case where the target algebra is a Lie algebra.  Unless otherwise stated, we assume throughout this section that $\g$ is a finite-dimensional Lie algebra and that $A$ is a commutative associative algebra (not necessarily finitely generated) over an algebraically closed field $k$ of characteristic zero.

%%%%%%%%%%%%%%%%%%%%%%%%%%%%%%%%%%%%%%%%%%%%%%%%%%%%%%%%%%%
\subsection{Irreducible finite-dimensional representations} \label{subsec:Lie-irr-fd-reps}
%%%%%%%%%%%%%%%%%%%%%%%%%%%%%%%%%%%%%%%%%%%%%%%%%%%%%%%%%%%

Define $\cR_x$, $x \in X_\rat$, and $\cR_X$ as in Section~\ref{sec:associative} (with $\fra$ replaced by $\g$), except that $0 \in \cR_x$ denotes the trivial one-dimensional representation and $\ev_\psi$ is the trivial one-dimensional representation if $\psi=0$.  Let $\cE^\Gamma$ denote the set of $\Gamma$-equivariant functions $\psi : X_\rat \to \cR_X$ such that $\psi(x) \in \cR_x$ for all $x \in X_\rat$ and $\Supp \psi$ is finite.

\begin{defin}[{Evaluation representation \cite[Def.~4.14]{NSS}}] \label{def:eval-rep}
  For $\psi \in \cE^\Gamma$, we define $\ev^\Gamma_\psi = \ev_\bx (\psi(x))_{x \in \bx}$, where $\bx \in X_*$ contains one element of each $\Gamma$-orbit in $\Supp \psi$.  Namely, if $\rho_x \co \g^x \to \End_k V_x$, $x \in \bx$, is an irreducible representation with isomorphism class $[\psi(x)]$ then $\ev_\psi^\Gamma$ is the composition
  \[ \textstyle
    M(X,\g)^\Gamma \xrightarrow{\ev_\bx} \boxplus_{x \in \bx}
    \, \g^x \xrightarrow{\otimes_{x \in \bx} \rho_x}
    \End_k (\bigotimes_{x \in \bx} V_x).
  \]
  We call $\ev^\Ga_\psi$ an \emph{evaluation representation} and note that it is always an irreducible finite-dimensional representation of $\frM=M(X,\g)^\Ga$ (see \cite[Prop.~4.9]{NSS}).\footnote{In \cite{NSS} an evaluation representation need not necessarily be irreducible. We have chosen to include irreducibility in the definition of an evaluation representation here since we will not deal with non-irreducible evaluation representations in this paper.}  By \cite[Lem.~4.13]{NSS}, $\ev^\Gamma_\psi$ is independent of the choice of $\bx$.  If $\psi$ is the map that is identically 0 on $X$, we define $\ev^\Gamma_\psi$ to be the isomorphism class of the trivial one-dimensional representation of $\frM$.
   Thus $\psi \mapsto \ev^\Gamma_\psi$ defines a map $\cE^\Gamma \to \scS$, where $\scS$ denotes the set of isomorphism classes of irreducible finite-dimensional representations of $\frM$.
\end{defin}

In the classification of irreducible finite-dimensional representations of equivariant map algebras, there are two essential differences between the associative and Lie cases.  First, one needs to consider evaluation representations supported on more than one orbit (compare $\cE^\Gamma$ to the $\cE^\Gamma_\mathrm{single}$ appearing in Corollary~\ref{cor:assoc-enumeration}).  Second, in the Lie case one needs, in general, to consider one-dimensional representations since $\frM$ need not be perfect.

Recall that the isomorphism classes of one-dimensional representations of a Lie algebra $L$ are naturally identified with the elements of the space $(L/[L,L])^*$.

\begin{theo}[{\cite[Thm.~5.5]{NSS}}] \label{thm:Lie}
  The map
  \[
    (\frM/[\frM,\frM])^* \times \cE^\Gamma \to \scS,\quad (\lambda, \psi) \mapsto \lambda \otimes \ev^\Gamma_\psi,\qquad \lambda \in (\frM/[\frM,\frM])^*,\quad \psi \in \cE^\Gamma,
  \]
  is surjective.  In particular, all irreducible finite-dimensional representations of $\frM$ are tensor products of an evaluation representation and a one-dimensional representation. If $\frM$ is perfect (i.e., $[\frM,\frM]=\frM$), then the map
  \[
    \cE^\Gamma \to \scS,\quad \psi \mapsto \ev^\Gamma_\psi,
  \]
  is a bijection.
\end{theo}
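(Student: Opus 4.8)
The plan is to analyze an arbitrary irreducible finite-dimensional representation $\rho \colon \frM \to \gl(V)$ by passing to its faithful quotient and exploiting the fact, supplied by Proposition~\ref{prop:fact}, that every finite-dimensional \emph{simple} quotient of $\frM$ factors through evaluation at a single $k$-rational point. First I would set $\frK = \Ker \rho$ and study $\overline{\frM} = \frM/\frK$, a finite-dimensional Lie algebra carrying a faithful irreducible representation. By Lie's theorem the solvable radical $\mathfrak{r}$ of $\overline{\frM}$ acts on $V$ by a character, so $[\overline{\frM}, \mathfrak{r}]$ acts by zero; faithfulness then forces $[\overline{\frM},\mathfrak{r}] = 0$, i.e.\ $\mathfrak{r}$ is central. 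Hence $\overline{\frM}$ is reductive, with center acting by scalars (Schur) and of dimension at most one, and its derived algebra $\frs = [\overline{\frM},\overline{\frM}]$ is semisimple, say $\frs = \frs_1 \boxplus \cdots \boxplus \frs_r$ with each $\frs_j$ simple.

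Next I would produce the points of the support. For each $j$ the composite $\frM \twoheadrightarrow \overline{\frM} \twoheadrightarrow \frs \twoheadrightarrow \frs_j$ is a surjection onto a finite-dimensional simple algebra, so Proposition~\ref{prop:fact} yields a $k$-rational point $x_j$ through whose evaluation $\ev_{x_j}$ this map factors; in particular $\g^{x_j} \twoheadrightarrow \frs_j$. Since $V$ is irreducible over the semisimple algebra $\frs \cong \prod_j \frs_j$, it is an outer tensor product $V \cong \bigotimes_j V_j$ of irreducibles $V_j$ for $\frs_j$, and each $V_j$ pulls back along $\g^{x_j} \twoheadrightarrow \frs_j$ to an irreducible representation of $\g^{x_j}$. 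Collecting the $x_j$ into $\Gamma$-orbits (and tensoring together the contributions of factors landing on a common orbit) defines a $\psi \in \cE^\Gamma$ with finite support whose evaluation representation $\ev^\Gamma_\psi$ agrees with $\rho$ on $[\frM,\frM]$: indeed $\rho([\frM,\frM]) = [\overline{\frM},\overline{\frM}] = \frs$, so $V$ as an $[\frM,\frM]$-module factors through $\frs$ and coincides with the restriction of $\ev^\Gamma_\psi$.

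It then follows that $\lambda := \rho \otimes (\ev^\Gamma_\psi)^*$ is trivial on $[\frM,\frM]$, hence one-dimensional (being an irreducible representation of the abelian Lie algebra $\frM/[\frM,\frM]$), and $\rho \cong \lambda \otimes \ev^\Gamma_\psi$ with $\lambda \in (\frM/[\frM,\frM])^*$. This proves surjectivity and the asserted tensor-product form. When $\frM$ is perfect we have $(\frM/[\frM,\frM])^* = 0$, so $\lambda$ is trivial and $\psi \mapsto \ev^\Gamma_\psi$ is already surjective; for bijectivity it remains to prove injectivity, i.e.\ to recover $\psi$ from the isomorphism class of $\ev^\Gamma_\psi$. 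Here I would argue as in \cite[Prop.~4.15]{NSS} (the exact analogue of Corollary~\ref{cor:assoc-enumeration} in the associative case): the surjectivity of the simultaneous evaluation $\ev_\bx \colon \frM \to \g^\bx$ from Definition~\ref{def:evalmap} lets one isolate each orbit in the support and read off the class $\psi(x) \in \cR_x$, so that $\ev^\Gamma_\psi \cong \ev^\Gamma_{\psi'}$ forces $\psi = \psi'$.

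The main obstacle is the structural reduction in the first two paragraphs: the real content is that the faithful quotient is reductive and that each of its simple constituents is pinned to a single rational point, which is precisely where Proposition~\ref{prop:fact} does the heavy lifting, and where one must be careful that the radical contributes only the one-dimensional twist $\lambda$ rather than additional, non-evaluation behaviour. Verifying that $\rho$ and $\ev^\Gamma_\psi$ genuinely agree on $[\frM,\frM]$ (so that their ``difference'' collapses to a character) is the delicate point, and the injectivity step in the perfect case rests on the separation-of-orbits argument rather than on any new idea.
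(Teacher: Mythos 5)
The survey itself does not prove Theorem~\ref{thm:Lie}; the result is quoted from \cite[Thm.~5.5]{NSS}, with only Proposition~\ref{prop:fact} reproduced as the key factorization tool. Your argument is essentially the argument of \cite{NSS}, and it is the precise Lie-theoretic analogue of the paper's proof of the associative case (Theorem~\ref{th:rep-ass}): Burnside's theorem is replaced by the standard characteristic-zero fact that a finite-dimensional Lie algebra with a faithful irreducible finite-dimensional module is reductive with center of dimension at most one; each simple summand of $\frs = [\overline{\frM},\overline{\frM}]$ is pinned to a rational point by Proposition~\ref{prop:fact}; and injectivity in the perfect case is delegated to \cite[Prop.~4.15]{NSS}, exactly as the paper does in Corollary~\ref{cor:assoc-enumeration}. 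So the overall structure is sound and faithful to the source.

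One step is misstated, however, and needs repair. Writing $V_\psi$ for the space of $\ev^\Gamma_\psi$ (equal to $V$ as an $\frs$-module by your construction), the representation $\rho \otimes (\ev^\Gamma_\psi)^*$ on $V \otimes V_\psi^*$ is neither trivial on $[\frM,\frM]$ nor one-dimensional: since $\rho$ and $\ev^\Gamma_\psi$ agree on $[\frM,\frM]$, an element $a \in [\frM,\frM]$ acts on $T \in \Hom_k(V_\psi,V)$ by the commutator $\rho(a)T - T\rho(a)$, which is generally nonzero. The standard repair is to pass to the subspace $\Hom_{[\frM,\frM]}(V_\psi,V)$: because $[\frM,\frM]$ is an ideal, this is an $\frM$-submodule of $\Hom_k(V_\psi,V)$; it contains the identity map; $[\frM,\frM]$ does act trivially on it; and it is one-dimensional by Schur's lemma, since $V$ restricted to $[\frM,\frM]$ is irreducible (the image of $[\frM,\frM]$ in $\End_k V$ is $\frs$, and the center of $\overline{\frM}$ acts by scalars). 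Hence $\frM$ acts on this line by a character $\lambda \in (\frM/[\frM,\frM])^*$, and the evaluation pairing $V_\psi \otimes \Hom_{[\frM,\frM]}(V_\psi,V) \to V$ gives the desired isomorphism $\lambda \otimes \ev^\Gamma_\psi \cong \rho$; equivalently, $\rho(a) - \ev^\Gamma_\psi(a)$ commutes with the irreducible $[\frM,\frM]$-action for every $a \in \frM$, hence equals the scalar $\lambda(a)$. You should also make explicit two bookkeeping points you gloss over: when simple summands $\frs_i, \frs_j$ are pinned to the same orbit, both maps factor through a single evaluation $\ev_x$ (the kernel of $\ev_x$ lies in both component kernels), so $\g^x$ surjects onto $\frs_i \boxplus \frs_j$ and $V_i \otimes V_j$ is genuinely an irreducible class in $\cR_x$; and the $\Gamma$-equivariant extension of $\psi$ is well defined because $\Gamma_x$ acts as the identity on $\g^x$. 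With these repairs your proof is complete and matches the original.
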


In fact, Theorem~\ref{thm:Lie} can be made more precise.  We have that $\lambda \otimes \ev^\Gamma_\psi = \lambda' \otimes \ev^\Gamma_{\psi'}$ if and only if there exists $\Phi \in \cE^\Gamma$ such that $\dim \ev^\Gamma_\Phi=1$, $\lambda' = \lambda - \ev^\Gamma_\Phi$ and $\ev^\Gamma_{\psi'} = \ev^\Gamma_{\psi \otimes \Phi}$.  Here $\psi \otimes \Phi \in \cE^\Gamma$ is given by $(\psi \otimes \Phi)(x) = \psi(x) \otimes \Phi(x)$, where $\psi(x)$ (respectively $\Phi(x)$) is the one-dimensional trivial representation if $x \not \in \Supp \psi$ (respectively $x \not \in \Supp \Phi$). In particular, the restriction of the map $(\lambda, \psi) \mapsto \lambda \otimes \ev^\Gamma_{\psi}$ to either factor (times the zero element of the other) is injective.

We see that Theorem~\ref{thm:Lie} is simplified when $\frM$ is perfect.  This happens, for example, if $[\g^\Gamma,\g]=\g$ or if $\g$ is semisimple and $\Gamma$ acts on $\g$ by diagram automorphisms (see \cite[Cor.~5.8]{NSS}) or if $\frM$ is a multiloop algebra (see \cite[Lem.~4.9]{abp2.5}).

Theorem~\ref{thm:Lie} is a generalization of the work of many authors.  The classification of irreducible finite-dimensional representations of (twisted) loop algebras (i.e.\ when $A=\C[t,t^{-1}]$) dates back to the work of Chari and Pressley (see \cite{Cha86,CP86,CP87,CP98}).  For the multiloop algebras, the irreducible modules were described in the untwisted case by Rao in \cite{ER01}.  In certain twisted cases, they were then described by Batra in \cite{Bat04}.  A complete classification of the irreducible finite-dimensional modules for twisted multiloop algebras was given by Lau in \cite{Lau10}.  In the setting that $\g$ is a simple Lie algebra, the action of $\Gamma$ is trivial and $A$ is finitely generated, the irreducible finite-dimensional representations were classified by Chari, Fourier and Khandai in \cite{CFK10} using the theory of Weyl modules.

%%%%%%%%%%%%%%%%%%%%%%%%%%%%%%%%%%%%%%%%%%%%%%%%
\subsection{Extensions and block decompositions}
%%%%%%%%%%%%%%%%%%%%%%%%%%%%%%%%%%%%%%%%%%%%%%%%

The category of finite-dimen\-sional representations  of equivariant map Lie algebras is, in general, not semisimple.  For example, the local Weyl modules (see Definition~\ref{def:local-Weyl}) are indecomposable but in general not irreducible.  It is therefore important to know the extensions between irreducible objects in this category.  In this section we review what is known regarding such extensions and the resulting block decomposition of the category of finite-dimensional representations.  We focus on the issue of extensions between evaluation representations, referring the reader to \cite{NS} for details on the more general case.

We assume throughout this section that $\g$ is a finite-dimensional reductive Lie algebra and that $A$ is finitely generated.  We note that in this case all subalgebras $\g^x$, $x\in X_\rat$, are also reductive.  For a Lie algebra $L$, we set $L_\ab := L/[L,L]$.  If $L$ is reductive, we let $L_\rss := [L,L]$ be its semisimple part.

The following result reduces the determination of extensions between evaluation modules to the computation of extensions between evaluation modules supported on a single orbit.  In the special case where $\Gamma$ is trivial and $\g$ is semisimple, it was first proven by Kodera (\cite[Thm.~3.6]{Kod10}).

\begin{theo}[{\cite[Thm.~3.7]{NS}}] \label{theo:two-eval}
  Suppose $V$ and $V'$ are evaluation modules corresponding to $\psi, \psi' \in \cE^\Gamma$ respectively.  Let $V = \bigotimes_{x \in \bx} V_x$ and $V' = \bigotimes_{x \in \bx} V_x'$ for some $\bx \in X_*$, where $V_x, V_x'$ are (possibly trivial) evaluation modules at the point $x \in \bx$.  Then the following are true.
  \begin{enumerate}
    \item \label{theo-item:two-eval:multiple-orbit-diff}
      If $\psi$ and $\psi'$ differ on more than one $\Gamma$-orbit, then $\Ext^1_\frM(V,V')=0$.

    \item \label{theo-item:two-eval:one-orbit-diff}
      If $\psi$ and $\psi'$ differ on exactly one orbit $\Gamma \cdot x_0$, $x_0 \in \bx$, then
      \[
        \Ext^1_\frM(V,V') \cong \Ext^1_\frM(V_{x_0},V'_{x_0}).
      \]

    \item \label{theo-item:two-eval:isom} If $\psi = \psi'$ (so $V \cong V')$, then
      \[ \textstyle
        (\frM_\ab^*)^{|\bx| -1 } \oplus \Ext^1_\frM (V, V) \cong \bigoplus_{x\in \bx} \Ext^1_\frM (V_x, V_x).
      \]
  \end{enumerate}
\end{theo}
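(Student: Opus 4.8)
The plan is to translate the statement into Lie-algebra cohomology and then exploit the tensor-product structure of evaluation modules. Writing $W:=\Hom_k(V,V')$ and $W_x:=\Hom_k(V_x,V_x')$, we have $\Ext^1_\frM(V,V')\cong H^1(\frM,W)$, and since $V,V'$ are inflated along the evaluation map $\ev_\bx\colon\frM\twoheadrightarrow\g^\bx:=\bplus_{x\in\bx}\g^x$, so is $W$; as a $\g^\bx$-module it is the outer tensor product $W=\bigotimes_{x\in\bx}W_x$, the factor $W_x$ carrying the $\g^x$-action and the remaining factors acting trivially. For each $x$ I would split $W_x=W_x^{\g^x}\oplus\tilde W_x$ into its trivial $\g^x$-isotypic component and the sum of the remaining isotypic components; by Schur's lemma $W_x^{\g^x}=\Hom_{\g^x}(V_x,V_x')$ equals $k$ when $V_x\cong V_x'$ and $0$ otherwise. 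Distributing the tensor product over this splitting, using that $H^1(\frM,-)$ commutes with finite direct sums, and that each trivial factor $W_x^{\g^x}$ pulls out as a multiplicity space, I obtain
\[
  H^1(\frM,W)\;\cong\;\bigoplus_{S\subseteq\bx}\Big(\ts\bigotimes_{x\notin S}W_x^{\g^x}\Big)\ot H^1\big(\frM,\ts\bigotimes_{x\in S}\tilde W_x\big).
\]

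The heart of the argument is the vanishing lemma: \emph{if $|S|\ge 2$ then $H^1(\frM,\bigotimes_{x\in S}\tilde W_x)=0$.} To prove it I would run the Hochschild--Serre five-term exact sequence for the ideal $\frK_S:=\Ker\ev_S\ideal\frM$, where $\ev_S\colon\frM\twoheadrightarrow\g^S:=\bplus_{x\in S}\g^x$; since $M_S:=\bigotimes_{x\in S}\tilde W_x$ factors through $\ev_S$, the ideal $\frK_S$ acts trivially on it, and the sequence reads
\[
  0\to H^1(\g^S,M_S)\to H^1(\frM,M_S)\to\Hom_{\g^S}\big((\frK_S)_\ab,M_S\big)\to H^2(\g^S,M_S).
\]
The inflation term vanishes: by the K\"unneth formula $H^1(\g^S,M_S)\cong\bigoplus_{x\in S}H^1(\g^x,\tilde W_x)\ot\bigotimes_{y\in S\setminus\{x\}}(\tilde W_y)^{\g^y}$, and each factor $(\tilde W_y)^{\g^y}=0$ by construction — here $|S|\ge 2$ is exactly what guarantees that every summand contains such a vanishing factor. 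It then remains to show that the restriction term $\Hom_{\g^S}\big((\frK_S)_\ab,M_S\big)$ is itself $0$, for then $H^1(\frM,M_S)=0$ regardless of the transgression.

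The decisive structural input — and the step I expect to be the main obstacle — is the description of $(\frK_S)_\ab=\frK_S/[\frK_S,\frK_S]$ as a $\g^S$-module. Writing $\g=\g_\rss\oplus\g_\ab$ (a $\Ga$-stable decomposition, as $\Ga$ acts by automorphisms) and $\frK_S=(\g\ot\mathfrak{m}_S)^\Ga$, where $\mathfrak{m}_S\subseteq A$ is the ideal of functions vanishing on the orbits $\Ga\cdot x$, $x\in S$, the claim I would establish is the identity $[\frK_S,\frK_S]=(\g_\rss\ot\mathfrak{m}_S^2)^\Ga$. Granting this, every indecomposable constituent of $(\frK_S)_\ab$ is either \emph{localized} at a single point $x\in S$ (a $\g^x$-module on which the other factors $\g^{x'}$ act by $0$, coming via the Chinese Remainder Theorem from $\mathfrak{m}_S/\mathfrak{m}_S^2\cong\bigoplus_{y}\mathfrak{m}_y/\mathfrak{m}_y^2$) or \emph{globally trivial} (coming from the central part $\g_\ab\ot\mathfrak{m}_S$). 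Both cases force $\Hom_{\g^S}((\frK_S)_\ab,M_S)=0$ when $|S|\ge 2$: a map out of a piece localized at $x$ would have to land in $(\tilde W_{x'})^{\g^{x'}}=0$ for some $x'\neq x$, while a map out of a globally trivial piece lands in $M_S^{\g^S}=\bigotimes_{x\in S}(\tilde W_x)^{\g^x}=0$. Establishing the inclusion $[\frK_S,\frK_S]\supseteq(\g_\rss\ot\mathfrak{m}_S^2)^\Ga$ in the twisted, reductive setting — separating the disjoint orbits, tracking the $\Ga$-action and the isotropy subalgebras $\g^x$, and using $[\g_\rss,\g_\rss]=\g_\rss$ to make $[\frK_S,\frK_S]$ large enough to collapse all ``mixed'' constituents — is the genuinely technical part.

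Finally I would read off the three cases from the displayed decomposition. In part~\eqref{theo-item:two-eval:multiple-orbit-diff}, differing on at least two orbits forces every $S$ with $\bigotimes_{x\notin S}W_x^{\g^x}\neq 0$ to contain those (at least two) points, so $|S|\ge 2$ and each surviving $H^1$ vanishes, giving $\Ext^1_\frM(V,V')=0$. In part~\eqref{theo-item:two-eval:one-orbit-diff}, the only surviving index is $S=\{x_0\}$, and since $W_{x_0}^{\g^{x_0}}=0$ we get $\tilde W_{x_0}=W_{x_0}$ and $H^1(\frM,W)\cong H^1(\frM,W_{x_0})=\Ext^1_\frM(V_{x_0},V'_{x_0})$. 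In part~\eqref{theo-item:two-eval:isom}, where all $V_x\cong V_x'$, only $S=\emptyset$ and the singletons survive, yielding $H^1(\frM,W)\cong\frM_\ab^*\oplus\bigoplus_{x\in\bx}H^1(\frM,\tilde W_x)$ (using $H^1(\frM,k)=\frM_\ab^*$ for $S=\emptyset$); comparing with the single-point identity $\Ext^1_\frM(V_x,V_x)\cong\frM_\ab^*\oplus H^1(\frM,\tilde W_x)$, in which the summand $\frM_\ab^*$ is the contribution of the scalars $k\,\Id\subseteq\End_k V_x$, and summing over $x\in\bx$ produces exactly the surplus $(\frM_\ab^*)^{|\bx|-1}$.
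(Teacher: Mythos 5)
Your overall reduction is sound and, in fact, close to the actual strategy of the proof in \cite{NS} (the survey states this theorem without proof, citing \cite[Thm.~3.7]{NS}): pass to $\rmH^1(\frM,W)$ for $W=\Hom_k(V,V')$, split each $W_x$ into its trivial and nontrivial $\g^x$-isotypic parts, distribute over subsets $S$, and kill the $|S|\ge 2$ terms by inflation--restriction along $\ev_S$ plus K\"unneth; your deduction of (a), (b), (c) from the vanishing lemma, including the count $(\frM_\ab^*)^{|\bx|-1}$ in (c), is correct bookkeeping. The genuine gap sits exactly at the step you flag as the technical heart: the identity $[\frK_S,\frK_S]=(\g_\rss\ot\sfm_S^2)^\Ga$ is \emph{false} in the generality of the theorem, which allows rational points with nontrivial isotropy (it does hold untwisted, and when $\Ga$ acts freely on $X_\rat$ --- compare the freeness hypotheses in Propositions~\ref{prop:ext-free-abelian} and~\ref{prop:block-decomp}). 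Counterexample: the Onsager algebra, one of the motivating examples of \cite{NS}. Let $A=k[t^{\pm 1}]$, $\Ga=\{1,\si\}$ with $\si(t)=t^{-1}$, and $\g=\lsl_2$ with $\si$ the Chevalley involution; write $\mathfrak{k}=k(e-f)=\g^\Ga$, $\frp=\Span(h,e+f)$, $z=t+t^{-1}$, $w=t-t^{-1}$, so that $w^2=(z-2)(z+2)$ and $\frM=\mathfrak{k}\ot k[z]\oplus\frp\ot w\,k[z]$. Take $S=\{x_1,x_2\}$ with $x_1$ the point $t=1$ (so $\Ga_{x_1}=\Ga$) and $x_2$ the point $t=a$, $a\ne\pm1$; set $c=a+a^{-1}$ and let $\sfm_S$ be the ideal of $\Ga\cdot S=\{1,a,a^{-1}\}$. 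Using $z-2=t^{-1}(t-1)^2\in\sfm_{x_1}^2$ one computes
\begin{align*}
  \frK_S &= \mathfrak{k}\ot(z-2)(z-c)\,k[z]\ \oplus\ \frp\ot w(z-c)\,k[z],\\
  (\g_\rss\ot\sfm_S^2)^\Ga &= \mathfrak{k}\ot(z-2)(z-c)^2\,k[z]\ \oplus\ \frp\ot w(z-2)(z-c)^2\,k[z],\\
  [\frK_S,\frK_S] &= \mathfrak{k}\ot(z-2)(z+2)(z-c)^2\,k[z]\ \oplus\ \frp\ot w(z-2)(z-c)^2\,k[z],
\end{align*}
the last line because $[\mathfrak{k},\mathfrak{k}]=0$, $[\mathfrak{k},\frp]=\frp$, $[\frp,\frp]=\mathfrak{k}$ and $w^2=(z-2)(z+2)$. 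So $(\g_\rss\ot\sfm_S^2)^\Ga/[\frK_S,\frK_S]$ is a one-dimensional constituent of $(\frK_S)_\ab$ supported at $z=-2$, i.e.\ at the \emph{other} fixed point $t=-1\notin\Ga\cdot S$. Moreover $\g_\ab=0$ here, so this extra trivial constituent also does not come from your second source $\g_\ab\ot\sfm_S$: the dichotomy you derive from the identity collapses together with the identity.

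In this example the parasitic constituent happens to be a trivial $\frM$-module, so it is still annihilated by $\Hom_{\g^S}(-,M_S)$ because $M_S^{\g^S}=0$; but your argument contains no proof that this is the general pattern, and that is the real theorem hiding here. What your vanishing lemma actually needs --- and what the preparatory lemmas of \cite{NS} establish --- is a localization of $(\frK_S)_\ab$ \emph{up to trivial modules}: no simple constituent is simultaneously nontrivial for two distinct $\g^x$, $\g^{x'}$ with $x,x'\in S$; equivalently, the defect of the natural comparison with the single-point abelianizations is a trivial $\frM$-module. This is obtained in \cite{NS} from the comaximality $\frK_{\Ga\cdot x}+\frK_{\Ga\cdot x'}=\frM$ of the kernels attached to distinct orbits, together with a careful separate treatment of the trivial isotypic components --- the same care that forces the distinction between $\frK_x$ and $\frZ_x$ in the single-orbit Theorem~\ref{theo:ext-eval-point}, and which cannot be bypassed by your equality. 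If you replace the false identity by this weaker localization statement (which you would still need to prove), the rest of your argument goes through unchanged.
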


The next result allows one to compute the extensions  between two evaluation modules supported on the same (single) orbit, which is essential in view of Theorem~\ref{theo:two-eval}.

\begin{theo}[{\cite[Thm.~3.9]{NS}}] \label{theo:ext-eval-point}
  Let $V$ and $V'$ be two single orbit evaluation representations supported on the same orbit $\Gamma \cdot x$ for some $x \in X_\rat$. Thus ${\g^x_\ab}$ acts on $V$ and $V'$ by linear forms, which we denote by $\la$ and $\la'$ respectively. Let
  \begin{gather*}
    \frK_x := \ker \ev_x, \\
    \frZ_x := \ev^{-1}_x({\g^x_\ab}) = \{ \al \in \frM : [\al, \frM] \subseteq \frK_x \}.
  \end{gather*}
  Then
  \begin{equation} \label{eq:evap1}
    \Ext_\frM^1(V, V') \cong
    \begin{cases}
      \Hom_{\g^x} (\frK_{x,\ab}, V^* \ot V') & \hbox{if $\la \ne \la'$}, \\
      \Hom_{\g^x_\rss}\,( \frZ_{x,\ab}, V^*\ot V') & \hbox{if $\la= \la'$}.
    \end{cases}
  \end{equation}
  In particular, if $\g^x$ is semisimple, then $\g^x=\g^x_\rss$, $\la=\la'=0$,
  $\frK_x=\frZ_x$, and
  \begin{equation} \label{eq:evap2}
    \Ext_\frM^1(V, V') \cong
    \Hom_{\g^x} (\frK_{x,\ab}, V^* \ot V').
  \end{equation}
\end{theo}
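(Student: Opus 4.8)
The plan is to identify $\Ext^1_\frM(V,V')$ with the first Lie algebra cohomology $\rmH^1(\frM, W)$, where $W := V^* \ot V' \cong \Hom_k(V,V')$ carries the $\frM$-module structure coming from the diagonal action. Since $V$ and $V'$ are evaluation representations at $x$, this action factors through $\ev_x$; in particular $\frK_x = \Ker \ev_x$ acts trivially on $W$, and the centre $\g^x_\ab$ acts on $W$ by the character $\la' - \la$ (by Schur's lemma, as $V, V'$ are irreducible). Whether $\la' - \la$ vanishes is exactly what will separate the two cases.

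The main tool is the five-term (inflation--restriction) exact sequence attached by Hochschild--Serre to an ideal $\mathfrak{I} \ideal \frM$ with quotient $\frQ = \frM/\mathfrak{I}$:
\[
0 \to \rmH^1(\frQ, W^{\mathfrak{I}}) \to \rmH^1(\frM, W) \xrightarrow{\ \mathrm{res}\ } \rmH^1(\mathfrak{I}, W)^{\frQ} \to \rmH^2(\frQ, W^{\mathfrak{I}}).
\]
In each case I choose $\mathfrak{I}$ so that the flanking terms $\rmH^1(\frQ, W^{\mathfrak{I}})$ and $\rmH^2(\frQ, W^{\mathfrak{I}})$ both vanish, whence restriction is an isomorphism. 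Since $\mathfrak{I}$ will act trivially on $W$, we will have $W^{\mathfrak{I}} = W$ and $\rmH^1(\mathfrak{I}, W) = \Hom(\mathfrak{I}_\ab, W)$, so that $\rmH^1(\mathfrak{I}, W)^{\frQ} = \Hom_\frQ(\mathfrak{I}_\ab, W)$, the relevant $\frQ$-action on $\mathfrak{I}_\ab$ being the one induced by the adjoint action of $\frM$.

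For $\la \ne \la'$ I take $\mathfrak{I} = \frK_x$, so $\frQ = \g^x$. Because the central character $\la' - \la$ is nonzero, some central element of $\g^x$ acts on $W$ by a nonzero scalar; as $\g^x$ acts trivially on its own cohomology, this forces $\rmH^n(\g^x, W) = 0$ for all $n$, in particular for $n = 1, 2$. The sequence then gives $\Ext^1_\frM(V,V') \cong \Hom_{\g^x}(\frK_{x,\ab}, W)$, the first case of \eqref{eq:evap1}. For $\la = \la'$ the centre acts trivially on $W$, so I instead take $\mathfrak{I} = \frZ_x$. The structural facts I will verify are that $\frZ_x$ is an ideal with $\ev_x(\frZ_x) = \g^x_\ab$ acting trivially on $W$ (so $W^{\frZ_x} = W$), and that $\frM/\frZ_x \cong \g^x_\rss$ is \emph{semisimple}. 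By Whitehead's first and second lemmas, $\rmH^1(\g^x_\rss, W) = \rmH^2(\g^x_\rss, W) = 0$, and restriction yields $\Ext^1_\frM(V,V') \cong \Hom_{\g^x_\rss}(\frZ_{x,\ab}, W)$, the second case. When $\g^x$ is semisimple one has $\g^x_\ab = 0$, forcing $\la = \la' = 0$ and $\frZ_x = \frK_x$, so \eqref{eq:evap2} drops out of either computation.

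The routine verifications to supply are that the adjoint action of $\frM$ descends to an action of $\g^x$ on $\frK_{x,\ab}$ (respectively of $\g^x_\rss$ on $\frZ_{x,\ab}$) and that the restriction maps land in the equivariant Hom-spaces. The genuine obstacle is conceptual rather than computational: one must recognise \emph{which} ideal makes the flanking quotient cohomologies vanish. For $\la = \la'$ this hinges on the fact that passing to $\frZ_x$ rather than $\frK_x$ produces a semisimple quotient, at which point Whitehead's lemmas do all the work; with $\frK_x$ the term $\rmH^2(\g^x, W)$ need not vanish and the extension problem would be genuinely obstructed.
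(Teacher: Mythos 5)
Your proposal is correct and follows essentially the same route as the proof of this result in [NS] (the survey itself only cites it): there, too, one identifies $\Ext^1_\frM(V,V')$ with $\rmH^1(\frM, V^*\ot V')$ and applies the Hochschild--Serre five-term sequence to the ideal $\frK_x$ when $\la\ne\la'$, killing $\rmH^1$ and $\rmH^2$ of the reductive quotient $\g^x$ via the nonzero central character $\la'-\la$, and to the enlarged ideal $\frZ_x$ when $\la=\la'$, so that the quotient $\g^x_\rss$ is semisimple and Whitehead's two lemmas give the vanishing. Your closing remark correctly isolates the key idea, namely that the choice of ideal must be adjusted in the case $\la=\la'$ precisely to make the flanking cohomology terms vanish.
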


 In the special case that $\Gamma$ is abelian and acts freely on $X_\rat$, the extensions are very simple to describe as the following result shows.

\begin{prop}[{\cite[Prop.~4.9]{NS}}] \label{prop:ext-free-abelian}
  Suppose $\Gamma$ is abelian, $\Gamma$ acts freely on $X_\rat$ and $\g$ is semisimple.  Then for any two evaluation modules $V_1$, $V_2$ at $x$ we have
  \[
    \Ext^1_\frM(V_1, V_2) \cong \Hom_\g( \g, V_1^* \ot V_2) \ot (\sfm_x/\sfm_x^2)^*,
  \]
  where we recall that $\sfm_x$ is the maximal ideal of $A$ corresponding to $x$.
\end{prop}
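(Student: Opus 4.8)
The plan is to obtain this as the special case of Theorem~\ref{theo:ext-eval-point} where the target subalgebra is all of $\g$, and then to compute the abelianized kernel $\frK_{x,\ab}$ explicitly. First I would note that freeness forces the reduction to the semisimple branch: since $\Ga$ acts freely, the isotropy group $\Ga_x$ is trivial, so $\g^x=\g$ is semisimple and the linear forms $\la,\la'$ of Theorem~\ref{theo:ext-eval-point} both vanish (as $\g^x_\ab=0$). Hence \eqref{eq:evap2} applies and gives
\[
  \Ext^1_\frM(V_1,V_2)\cong\Hom_\g(\frK_{x,\ab},\,V_1^*\ot V_2),
\]
with $\g=\g^x$ acting on $\frK_{x,\ab}=\frK_x/[\frK_x,\frK_x]$ through $\frM/\frK_x\cong\g$. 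Everything then reduces to identifying $\frK_{x,\ab}$ as a $\g$-module.

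Next I would describe $\frK_x$ geometrically. Because elements of $\frM$ are $\Ga$-equivariant, an $\al\in\frM$ vanishes at $x$ iff it vanishes on the whole orbit $\Ga\cdot x$; writing $I\ideal A$ for the ($\Ga$-stable) ideal of functions vanishing on $\Ga\cdot x$, this gives $\frK_x=(\g\ot I)^\Ga$. The powers $(\g\ot I^n)^\Ga$ form a Lie filtration with $[(\g\ot I)^\Ga,(\g\ot I)^\Ga]\subseteq(\g\ot I^2)^\Ga$, and since taking $\Ga$-invariants is exact ($|\Ga|$ invertible) one has $\frK_x/(\g\ot I^2)^\Ga\cong(\g\ot I/I^2)^\Ga$. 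Thus the statement comes down to two claims: (i) $[\frK_x,\frK_x]=(\g\ot I^2)^\Ga$, so that $\frK_{x,\ab}\cong(\g\ot I/I^2)^\Ga$; and (ii) $(\g\ot I/I^2)^\Ga\cong\g\ot(\sfm_x/\sfm_x^2)$ as $\g$-modules, with $\g$ acting adjointly on the first factor and trivially on the second.

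For (ii) I would use the Chinese Remainder Theorem: the maximal ideals $\sfm_y$, $y\in\Ga\cdot x$, are pairwise comaximal, so $I/I^2\cong\bigoplus_{y\in\Ga\cdot x}\sfm_y/\sfm_y^2$, and since $\Ga$ permutes the orbit freely and transitively (as $\Ga_x=1$) this is the induced module $\mathrm{Ind}_{\{1\}}^\Ga(\sfm_x/\sfm_x^2)$. Frobenius reciprocity (equivalently, evaluation at $x$ together with the projection formula for the regular representation) then gives $(\g\ot\mathrm{Ind}_{\{1\}}^\Ga U)^\Ga\cong\g\ot U$ with the adjoint action on $\g$ and trivial action on $U=\sfm_x/\sfm_x^2$. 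Combined with (i) and the isomorphism $\Hom_\g(\g\ot U,W)\cong\Hom_\g(\g,W)\ot U^*$, valid for a trivial finite-dimensional module $U$, this yields exactly $\Hom_\g(\g,V_1^*\ot V_2)\ot(\sfm_x/\sfm_x^2)^*$, as claimed.

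The main obstacle is claim (i), i.e.\ the reverse inclusion $(\g\ot I^2)^\Ga\subseteq[\frK_x,\frK_x]$; this is where freeness is essential and is false for general actions. I would prove it by passing to the $\Xi$-gradings $\g=\bigoplus_\xi\g_\xi$ and $A=\bigoplus_\mu A_\mu$, so that $\frK_x=\bigoplus_\xi\g_\xi\ot I_{-\xi}$ and $[\frK_x,\frK_x]=\sum_{\xi,\eta}[\g_\xi,\g_\eta]\ot I_{-\xi}I_{-\eta}$. On the one hand, perfectness of $\g$ together with the grading gives graded perfectness, $\sum_{\xi+\eta=\zeta}[\g_\xi,\g_\eta]=\g_\zeta$ for every $\zeta$. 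On the other hand, a free $\Ga$-action makes $A$ strongly graded, $A_\mu A_{-\mu}=A_0\ni1$, which forces $I$ to be strongly graded as a module and hence $(I^2)_{-\zeta}=I_{-\xi}I_{-\eta}$ for \emph{every} splitting $\xi+\eta=\zeta$, not merely for their sum. Feeding both facts into the graded expression collapses $[\frK_x,\frK_x]$ onto $\bigoplus_\zeta\g_\zeta\ot(I^2)_{-\zeta}=(\g\ot I^2)^\Ga$, giving the equality. The delicate point is precisely that without freeness the factors $I_{-\xi}I_{-\eta}$ depend on the chosen splitting while the pieces $[\g_\xi,\g_\eta]$ need not exhaust $\g_\zeta$ compatibly, so the argument genuinely relies on the strong grading supplied by the free action (if one prefers, after localizing at the invariant functions nonvanishing on $\Ga\cdot x$).
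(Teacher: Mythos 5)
Your proof is correct and follows essentially the same route as \cite[Prop.~4.9]{NS}, which the survey cites without reproducing the argument: one specializes Theorem~\ref{theo:ext-eval-point} to the semisimple branch \eqref{eq:evap2} and then identifies $\frK_{x,\ab} \cong \g \ot (\sfm_x/\sfm_x^2)$ as a $\g$-module. Your key mechanism---freeness of the $\Ga$-action forcing $A$ to be strongly $\Xi$-graded (via $A_\xi A_{-\xi} \subseteq \sfm$ contradicting freeness at a maximal ideal), which together with graded perfectness of $\g$ collapses $[\frK_x,\frK_x]$ onto $(\g \ot I^2)^\Ga$---is precisely the one used there, so there is nothing to add.
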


Note that $(\sfm_x/\sfm_x^2)^*$ is the Zariski tangent space of $X$ at the point $x$. Proposition~\ref{prop:ext-free-abelian} was first proved in the case where $\Gamma$ is trivial by Kodera (\cite[Prop.~3.1]{Kod10}).  In the case of the current algebra (where $k=\C$ and $A=\C[t]$), extensions were also described in \cite{CG05}.

Having an explicit formula for the extensions between simple  objects in a category allows one (at least in theory) to compute the block decomposition of that category. In the case that $\g$ is semisimple and $\Gamma$ is abelian and acts freely on $X_\rat$, the description of the blocks is quite simple.  First note that $\Gamma$ acts on the weight lattice $P$ of $\g$ via the quotient $\Aut \g \twoheadrightarrow \Aut \g/ \Int \g \cong \Out \g$, where $\Int \g$ denotes the group of inner automorphisms of $\g$ and $\Out \g$ denotes the group of diagram automorphisms of $\g$.

\begin{prop}[{\cite[Prop.~6.6]{NS}}] \label{prop:block-decomp}
  Suppose that $\Gamma$ is abelian and acts freely on $X_\rat$ and that $\g$ is semisimple.  Furthermore, assume that, for all $x \in X_\rat$, the tangent space $(\sfm_x/\sfm_x^2)^* \ne 0$.  For example, assume that $X$ is an irreducible algebraic variety of positive dimension.  Then the blocks of the category of finite-dimensional representations of $\frM$ are naturally enumerated by finitely supported equivariant maps $X_\rat \to P/Q$.
\end{prop}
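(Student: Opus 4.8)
The plan is to construct an explicit bijection between the blocks and the finitely supported equivariant maps $f\colon X_\rat\to P/Q$, realized by reducing highest weights modulo $Q$. Recall that a block is a linking class of simple objects, two simples being linked when joined by a chain of nonzero extensions. Since $\Ga$ acts freely on $X_\rat$, every isotropy group is trivial, so $\g^x=\g$ is semisimple for each $x$; in particular $\frM$ is perfect, and Theorem~\ref{thm:Lie} identifies the simples with the finitely supported equivariant maps $\psi\in\cE^\Ga$, where now $\psi(x)$ is the class of an irreducible $\g$-module, i.e.\ a dominant weight $\mu_x\in P^+$. To such a $\psi$ I associate $\bar\psi\colon X_\rat\to P/Q$, $\bar\psi(x)=\mu_x+Q$ (and $0$ off the support). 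Because $\Ga$ acts on $P$ through $\Out\g=\Aut\g/\Int\g$ and this action preserves the root lattice $Q$, the map $\bar\psi$ is again equivariant and finitely supported; it is moreover visibly surjective onto all such maps, since $P^+\to P/Q$ is onto (the images of the fundamental weights generate the finite group $P/Q$) and one extends any choice of dominant representatives equivariantly over each free orbit.

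First I would show that $\bar\psi$ is a block invariant. It suffices to treat a single nonzero $\Ext^1_\frM(V_\psi,V_{\psi'})$. Theorem~\ref{theo:two-eval} forces $\psi$ and $\psi'$ to agree off a single orbit $\Ga\cdot x_0$ and reduces the computation to the $\Ext^1$ of the single-point evaluation factors at $x_0$; Proposition~\ref{prop:ext-free-abelian} then gives $\Ext^1_\frM\cong\Hom_\g(\g,V^*\otimes V')\otimes(\sfm_{x_0}/\sfm_{x_0}^2)^*$. As the tangent space is nonzero by hypothesis, nonvanishing is equivalent to the adjoint representation $\g$ occurring in $V(\mu)^*\otimes V(\mu')$, where $\mu,\mu'$ are the highest weights at $x_0$. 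Since every weight of $\g$ lies in $Q$, such an occurrence forces $\mu-\mu'\in Q$; applying $\Ga$ (which preserves $Q$) yields the same congruence at every point of the orbit, while on all other orbits $\psi=\psi'$. Hence $\bar\psi=\bar{\psi'}$, so $\bar\psi$ descends to a well-defined map from blocks to finitely supported equivariant maps $X_\rat\to P/Q$.

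The crux is the converse (injectivity): if $\bar\psi=\bar{\psi'}$ then $V_\psi$ and $V_{\psi'}$ lie in the same block. I would argue by modifying $\psi$ one orbit at a time, reducing to the case where $\psi,\psi'$ agree except on one free orbit $\Ga\cdot x$, with dominant base values $\mu,\mu'$ satisfying $\mu-\mu'\in Q$. By the one-orbit case of Theorem~\ref{theo:two-eval} together with Theorem~\ref{theo:ext-eval-point} and Proposition~\ref{prop:ext-free-abelian}, a single link changing the base value from $\nu$ to $\nu'$ is available precisely when $V(\nu')$ is a constituent of $\g\otimes V(\nu)$. Thus I must prove that the graph on $P^+$ with an edge joining $\nu$ and $\nu'$ whenever $V(\nu')\subseteq\g\otimes V(\nu)$ has connected components exactly the cosets $P^+\cap(\lambda+Q)$. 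Writing $\g$ and each $V(\nu)$ as outer tensor products over the simple ideals of $\g$ reduces this to simple $\g$, where $P/Q=\prod_j P_j/Q_j$.

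For simple $\g$, connectivity of a fixed $Q$-coset is the main obstacle and the only genuinely Lie-theoretic input. I would establish it from the decomposition of $\g\otimes V(\nu)$: tensoring the highest weight vector of $V(\nu)$ with the root vector of the highest root $\theta$ shows $V(\nu+\theta)\subseteq\g\otimes V(\nu)$, so one may always move by $+\theta$; combined with the lower constituents $V(\nu+\beta)$, for $\beta$ a root with $\nu+\beta$ dominant, coming from the remaining weights of the adjoint representation, one shows that every dominant weight of a coset is linked to a minimal dominant weight of that coset and that these minimal weights are mutually linked, whence the coset is connected. The precise constituent analysis, e.g.\ via the Racah--Klimyk formula or the PRV theorem, is where the real work lies. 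Combining the well-defined surjection of the earlier steps with this injectivity then yields the claimed bijection between blocks and finitely supported equivariant maps $X_\rat\to P/Q$.
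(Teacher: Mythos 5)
Your overall architecture is the correct one, and it is essentially the route taken in \cite[Prop.~6.6]{NS} (which in turn follows the spectral-character method of Chari--Moura \cite{CM04} and Kodera \cite{Kod10}): free action gives $\g^x=\g$, Theorem~\ref{thm:Lie} parameterizes the simples by $\cE^\Gamma$, Theorem~\ref{theo:two-eval} shows linkage can only change $\psi$ one orbit at a time, Proposition~\ref{prop:ext-free-abelian} (with the hypothesis $(\sfm_x/\sfm_x^2)^*\neq 0$) reduces nonvanishing of $\Ext^1$ at a point to the occurrence of the adjoint module in $V(\mu)^*\otimes V(\mu')$, and since all weights of $\g$ lie in $Q$ this forces $\mu-\mu'\in Q$, so the coset map $\bar\psi$ is a block invariant. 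All of that half of your argument is sound. (One small caveat: triviality of the isotropy groups does not by itself yield $[\frM,\frM]=\frM$; perfectness under a free action is a separate lemma of \cite{NS}, alluded to in Section~\ref{prob:Lie-coho} above, and should be invoked rather than asserted ``in particular.'')

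The genuine gap is exactly where you locate ``the real work,'' and the mechanism you sketch for it is false. It is not true that $V(\nu+\beta)$ occurs in $\g\otimes V(\nu)$ whenever $\beta$ is a root with $\nu+\beta$ dominant: take $\g$ of type $B_2$, $\nu=0$ and $\beta=\theta_s$ the highest short root; then $\g\otimes V(0)=V(\theta)$ does not contain $V(\theta_s)$. The PRV theorem only guarantees the components with extreme weight $\nu+w\theta$, i.e.\ moves by \emph{long} roots, and in types $B_n$, $C_n$, $F_4$, $G_2$ the long roots generate a \emph{proper} sublattice of $Q$; hence your proposed edges cannot connect a full $Q$-coset, and the ``minimal dominant weight'' scheme built on them breaks down in every non-simply-laced type. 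The needed statement --- that the components of the graph on $P^+$ with an edge $\nu\sim\nu'$ whenever $\Hom_\g\bigl(\g, V(\nu)^*\otimes V(\nu')\bigr)\neq 0$ are exactly the sets $P^+\cap(\lambda+Q)$ --- is true, but it is the key lemma of \cite{CM04}, used as the Lie-theoretic input in both \cite{Kod10} and \cite{NS}. A repair along your lines is possible: the edge $\nu\sim\nu+\theta$ is always available, so one can first translate deep into the interior of the dominant chamber, where the Brauer--Klimyk formula does produce $V(\nu+\beta)$ for \emph{every} root $\beta$ once all $\langle\nu,\alpha_i^\vee\rangle$ are large, perform the root-moves there, and descend; in the $B_2$ example this gives the chain $0\sim\theta\sim\theta_s$, since $V(\theta_s)\subseteq\g\otimes\g$. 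As written, however, the decisive connectivity step of your proof is missing, and the specific constituent claim supporting it is incorrect.
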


Special cases of EMAs satisfying the above assumptions are loop algebras, for which block decompositions were described in \cite{CM04} for the untwisted case and in \cite{Sen10} for the twisted case.  In the case that $\Gamma$ is trivial, the block decomposition was described in \cite[\S4.2]{Kod10}.  We refer the reader to \cite[\S5]{NS} for a more general description (i.e.\ with weaker assumptions than those of Proposition~\ref{prop:block-decomp}).

Combining the above with the results of Section~\ref{subsec:Lie-irr-fd-reps}, one sees a particularly nice pattern emerge in the theory of EMAs and their representations.  Namely, many important quantities are parameterized by sets of equivariant maps on $X$ (or $X_\rat$):
\begin{itemize}
  \item The EMA $\frM$ itself is the space of equivariant maps $X \to \g$.
  \item Provided $\frM$ is perfect (which is the case in many important examples), the irreducible finite-dimensional representations are parameterized by the set of finitely supported equivariant maps from $X_\rat$ to the set of isomorphism classes of irreducible finite-dimensional representations of the subalgebras $\g^x$, $x\in X_\rat$.
  \item Provided $\Gamma$ is abelian and acts freely on $X_\rat$, $\g$ is semisimple and $A$ finitely generated, the blocks of the category of finite-dimensional representations are parameterized by the set of finitely supported equivariant maps $X_\rat \to P/Q$.
\end{itemize}

%%%%%%%%%%%%%%%%%%%%%%%%%
\subsection{Weyl modules}
%%%%%%%%%%%%%%%%%%%%%%%%%

In this section we present an overview of the present state of the theory of Weyl modules.  We
begin with the untwisted case (i.e.\ when $\Gamma$ is trivial).  We assume
throughout this section that $\g$ is a simple Lie algebra.  As before,  $A$ is a commutative associative algebra with unit and both $\g$ and $A$ are defined over an algebraically closed field $k$ of characteristic zero.  We fix a triangular decomposition $\g = \frn^- \oplus \frh \oplus \frn^+$ of $\g$ and let $P^+$ and $Q^+$ denote the dominant integral weight lattice and positive root lattice of $\g$ respectively.  We also identify $\g$ with the subalgebra $\g \otimes k \subseteq \g \otimes A$

Given a left $\g$-module $V$, let
\[
  P_A(V) = U(\g \otimes A) \otimes_{U(\g)} V.
\]
Now suppose $\lambda \in P^+$ and  let $V(\lambda)$ be the corresponding
irreducible highest weight module of $\g$.

\begin{defin}[Untwisted global Weyl module]
  The \emph{(untwisted) global Weyl module} $W_A(\lambda)$ associated to $\lambda \in P^+$ is the unique maximal quotient (of $(\g \otimes A)$-modules) of $P_A(V(\lambda))$ whose weights are contained in $\lambda - Q^+$.  In other words
  \[ \ts
    W_A(\lambda) := P_A(V(\lambda)) \Big/ \sum_{\mu \not \le \lambda} U(\g \otimes A)P_A(V(\lambda))_\mu.
  \]
\end{defin}

It is possible to give an explicit description of the global Weyl modules in
terms of generators and relations.  For this, we fix a set $\{e_i,f_i,h_i\}_{i \in I}$ of Chevalley generators of $\g$.

\begin{prop}[{\cite[Prop.~4]{CFK10}}]
  For $\lambda \in P^+$, the global Weyl module $W_A(\lambda)$ is generated as a $(\g \otimes A)$-module by a single vector $w_\lambda$ with defining relations
  \[
    (\frn^+ \otimes A)w_\lambda = 0,\quad hw_\lambda = \lambda(h)w_\lambda,\quad f_i^{\lambda(h_i)+1}w_\lambda = 0,\qquad i \in I,\quad h \in \frh.
  \]
\end{prop}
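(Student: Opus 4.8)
The plan is to identify $W_A(\lambda)$ with the cyclic $(\g\otimes A)$-module $\tilde M$ presented by a single generator $w$ subject to exactly the three listed relations, by producing surjections in both directions that each carry generator to generator. Write $v_\lambda$ for the highest weight vector of $V(\lambda)$ and $w_\lambda$ for the image of $1\otimes v_\lambda$ in $W_A(\lambda)$.

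First I would check that $w_\lambda$ satisfies the relations, which yields a surjection $\tilde M\twoheadrightarrow W_A(\lambda)$. The relations $hw_\lambda=\lambda(h)w_\lambda$ and $f_i^{\lambda(h_i)+1}w_\lambda=0$ already hold in $P_A(V(\lambda))$, since $v\mapsto 1\otimes v$ intertwines the $\g$-action and $v_\lambda$ has weight $\lambda$ with $f_i^{\lambda(h_i)+1}v_\lambda=0$ in $V(\lambda)$. The relation $(\frn^+\otimes A)w_\lambda=0$ holds in the quotient: $\frn^+\otimes A$ raises the $\frh$-weight by a positive root, so $(\frn^+\otimes A)w_\lambda$ lies in weight spaces $\mu\not\le\lambda$, which are killed in $W_A(\lambda)$ by definition. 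Since $P_A(V(\lambda))=U(\g\otimes A)(1\otimes v_\lambda)$ is cyclic, so is $W_A(\lambda)$, and the universal property of the presentation gives the surjection $\tilde M\to W_A(\lambda)$, $w\mapsto w_\lambda$.

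Next I would produce the reverse surjection. In $\tilde M$ the generator $w$ is a $\g$-highest weight vector of weight $\lambda$ annihilated by each $f_i^{\lambda(h_i)+1}$, so $U(\g)w$ is a quotient of $V(\lambda)$ and there is a $\g$-map $V(\lambda)\to\tilde M$, $v_\lambda\mapsto w$; by Frobenius reciprocity for the induction $P_A=U(\g\otimes A)\otimes_{U(\g)}-$ this extends to a surjective $(\g\otimes A)$-map $P_A(V(\lambda))\twoheadrightarrow\tilde M$, $1\otimes v_\lambda\mapsto w$. The key point is that $\tilde M$ has all its weights in $\lambda-Q^+$: by the PBW decomposition $U(\g\otimes A)=U(\frn^-\otimes A)U(\frh\otimes A)U(\frn^+\otimes A)$ together with $(\frn^+\otimes A)w=0$ we get $\tilde M=U(\frn^-\otimes A)U(\frh\otimes A)w$, and since $\frh\otimes A$ acts by $\frh$-weight zero while $w$ has weight $\lambda$, the factor $U(\frh\otimes A)w$ sits in weight $\lambda$ and $U(\frn^-\otimes A)$ only lowers weights by $Q^+$. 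Thus $\tilde M$ is a quotient of $P_A(V(\lambda))$ with weights in $\lambda-Q^+$, so by the maximality built into the definition of $W_A(\lambda)$ the map factors as $P_A(V(\lambda))\twoheadrightarrow W_A(\lambda)\twoheadrightarrow\tilde M$, giving a surjection $W_A(\lambda)\to\tilde M$, $w_\lambda\mapsto w$.

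Finally, the two surjections send generator to generator, so their composite $\tilde M\to\tilde M$ fixes $w$; being a module endomorphism of the cyclic module $\tilde M$ it is the identity, whence both maps are isomorphisms and $W_A(\lambda)\cong\tilde M$, which is the assertion. I expect the only real subtlety to be the weight bound in the previous paragraph: it is essential to use the \emph{full} relation $(\frn^+\otimes A)w=0$ rather than merely $\frn^+w=0$, so that the PBW reduction removes all raising operators and pins the weights into $\lambda-Q^+$; the rest is a routine matching of universal properties.
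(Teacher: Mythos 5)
Your proof is correct. The paper itself offers no proof of this statement---it is quoted directly from \cite[Prop.~4]{CFK10}---and your argument (surjections in both directions, matching the universal property of the presentation against the maximality in the definition of $W_A(\lambda)$ via Frobenius reciprocity for $P_A$, with the PBW factorization $U(\g\otimes A)=U(\frn^-\otimes A)U(\frh\otimes A)U(\frn^+\otimes A)$ and the full relation $(\frn^+\otimes A)w=0$ pinning the weights of the presented module into $\lambda-Q^+$) is essentially the standard proof found in that reference.
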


We now define the local Weyl modules.  We define $\mathcal{R}_x$ and $\mathcal{R}_X$ as in Section~\ref{subsec:Lie-irr-fd-reps}.  For $x \in X_\rat$, we identify $\cR_x$ with $P^+$ by associating to an isomorphism class of irreducible finite-dimensional $\g$-modules the highest weight of the modules in that class.  In this way we identify $\cE$ with the set of finitely supported maps from $X_\rat$ to $P^+$.

\begin{defin}[Untwisted local Weyl module] \label{def:local-Weyl}
  Given $\psi \in \cE$ (see Section~\ref{subsec:Lie-irr-fd-reps}), the \emph{(untwisted) local Weyl module} $W(\psi)$ is the $(\g \otimes A)$-module generated by a nonzero vector $w_\psi$ with defining relations
  \begin{gather*}
    (\frn^+ \otimes A)w_\psi=0, \\
    \ts \alpha w_\psi = \left( \sum_{x \in \Supp \psi} \psi(x)(\alpha(x)) \right) w_\psi,\quad \alpha \in \frh \otimes A, \\ \ts
    (f_i \otimes 1)^{\lambda(h_i)+1}w_\psi=0,\quad i \in I,\quad \text{where } \lambda = \sum_{x \in \Supp \psi} \psi(x).
\end{gather*}
\end{defin}

Global and local Weyl modules were first defined for affine algebras (i.e.\ loop algebras, where $X=k^\times$) by Chari and Pressley in \cite{CP01}, where many of their important properties were proved.  This definition and some analogous results were extended to the case where $A$ is the coordinate ring of an algebraic variety by Feigin and Loktev in \cite{FL04}.  A more general functorial approach to Weyl modules for arbitrary finitely generated $A$ was taken by Chari, Fourier and Khandai in \cite{CFK10}.

In the loop case, where $A=\C[t,t^{-1}]$, local Weyl modules are  $q=1$ limits of irreducible representations of quantum affine algebras.  This was conjectured in \cite{CP01} and proved there in the case $\g = \lsl_2$.  It was also shown there that the general case follows from conjectural dimension formulas for these modules.  Using these results, the statement for general $\g$ then follows from results in \cite{CL06,FoL07,Nao10}.  However, the local Weyl modules do not remain
irreducible in the $q \to 1$ limit.  In general, local Weyl modules are reducible indecomposable representations.  The module $W(\psi)$ is the largest indecomposable representation of highest weight $\lambda$ (as a $\g$-module) and irreducible quotient $\ev^\Gamma_\psi$.

It is natural to seek to extend the notion of local and global Weyl modules to the twisted setting, where $\Gamma$ is nontrivial.  Some progress has been made towards this goal (see Section~\ref{prob:Lie-rep}\eqref{open-prob:Weyl-modules} for some open problems in this direction).  In \cite{CFS08}, local Weyl modules were defined and studied for the twisted loop algebras.  The main idea there was to consider the restrictions of local Weyl modules for loop algebras to the twisted subalgebras.  More generally, in \cite{FKKS12}, local Weyl modules were defined and studied in the case that $\Gamma$ is abelian and its action on $X_\rat$ is free.  In the twisted loop algebra setting (where $A = \C[t,t^{-1}]$ and $\Gamma$ is a cyclic group acting on $\g$ by diagram automorphisms), \emph{global} Weyl modules were defined and studied in \cite{FMS11}.  A more general approach, including a definition of global Weyl modules in a large degree of generality, has been taken in~\cite{FMS12}.

%%%%%%%%%%%%%%%%%%%%%%%%%%%%%%%%%%%%%%%%%%%%%%%%%%%%%%%%%%%%%%%%%%%%%
\subsection{Moving beyond finite-dimensional Lie algebras as targets} \label{subsec:super-Virasoro}
%%%%%%%%%%%%%%%%%%%%%%%%%%%%%%%%%%%%%%%%%%%%%%%%%%%%%%%%%%%%%%%%%%%%%

We conclude this section with a brief overview of some other equivariant map algebras appearing in the literature.  For these, it is necessary to slightly generalize the notion of an evaluation representation.  For later use we state the following definition in the context of Lie superalgebras and we remind the reader that all Lie algebras are also Lie superalgebras.

\begin{defin}[Generalized evaluation representation]
  Let $\g$ be an arbitrary Lie superalgebra.  Suppose $\sfm_1,\dotsc,\sfm_\ell$ are pairwise distinct maximal ideals of $A$, $n_1,\dotsc,n_\ell \in \mathbb{N}_+$, and $\g \otimes (A/\sfm^{n_i}) \to \End_k V_i$ is a representation of $\g \otimes (A/\sfm^{n_i})$ for $i=1,\dots,\ell$.  Then the composition
  \begin{equation} \label{eq:gen-eval} \ts
    \g \otimes A \twoheadrightarrow \bigoplus_{i=1}^\ell (\g \otimes A)/(\g \otimes \sfm^{n_i}) \cong \bigoplus_{i=1}^\ell \g \otimes (A/\sfm^{n_i}) \xrightarrow{\rho} \End_k V_i
  \end{equation}
  is called a \emph{generalized evaluation representation} of $\g \otimes A$.
\end{defin}

Note that if $n_i=1$, then the projection $A \twoheadrightarrow A/\sfm_i$ corresponds to evaluation at the point $x_i \in X$ corresponding to $\sfm_i$.

Instead of the finite-dimensional targets discussed earlier in this section, one can also consider map algebras $\fra \otimes A$, where $\fra$ is an infinite-dimensional Lie algebra.  For example, one can take $\fra$ to be the \emph{Virasoro algebra} $\Vir$.  The unitary irreducible quasifinite $\Vir$-modules (we recall that a module is quasifinite if it has finite-dimensional weight spaces) were classified by Chari and Pressley in \cite{CP88}.  All irreducible quasifinite $\Vir$-modules (without the assumption of unitarity) were then classified by Mathieu in \cite{Mat92}, where it was shown that they are all highest weight modules, lowest weight modules or \emph{modules of the intermediate series} (otherwise known as \emph{tensor density modules}; their nonzero weight spaces are all one-dimensional).  The irreducible quasifinite modules for the \emph{map Virasoro algebra} $\Vir \otimes A$ (where $A$ is finitely generated and $k=\C$) where classified by the second author as follows.
\begin{theo}[{\cite[Th.~5.5]{Sav-Virasoro}}]
  Assume $A$ is a finitely generated $\C$-algebra.  Then any irreducible quasifinite $(\Vir \otimes A)$-module is one of the following:
  \begin{enumerate}
    \item a single point evaluation module corresponding to a $\Vir$-module of the intermediate series,
    \item a generalized evaluation module of the form~\eqref{eq:gen-eval}, where all the $V_i$, $i=1,\dotsc,\ell$, are irreducible highest weight modules, or
    \item a generalized evaluation module of the form~\eqref{eq:gen-eval}, where all the $V_i$, $i=1,\dotsc,\ell$, are irreducible lowest weight modules.
  \end{enumerate}
  In particular, they are all generalized evaluation modules.
\end{theo}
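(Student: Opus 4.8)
The plan is to reduce the classification to the case of a \emph{finite-dimensional} quotient of $A$, where one can split the target into local pieces and analyse each of them using Mathieu's classification of irreducible quasifinite $\Vir$-modules. Throughout, write $M$ for the given irreducible quasifinite $(\Vir\ot A)$-module and decompose it into weight spaces $M=\bigoplus_\mu M_\mu$ for the semisimple operator $L_0\ot 1$; quasifiniteness means $\dim M_\mu<\infty$. Since $L_n\ot a$ sends $M_\mu$ into $M_{\mu-n}$, irreducibility forces $\Supp M$ into a single coset $\mu_0+\Z$, and we fix once and for all whether this support is bounded above, bounded below, or unbounded in both directions.

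First I would isolate the annihilator $I=\{a\in A : (\Vir\ot a)M=0\}$. Using the relation $[L_m\ot b,\,L_n\ot a]=(m-n)L_{m+n}\ot ab+\tfrac{m^3-m}{12}\delta_{m+n,0}\,C\ot ab$ one sees that if $a\in I$ then $(\Vir\ot ab)M=0$ for every $b$: each such bracket annihilates $M$ because $L_n\ot a$ does, the choice $(m,n)=(k,0)$ recovers $L_k\ot ab$ for $k\ne 0$, and the choices $(m,n)=(1,-1)$ and $(2,-2)$ extract $L_0\ot ab$ and then $C\ot ab$. Hence $I$ is an ideal of $A$, and $M$ becomes a module over $\Vir\ot(A/I)$ on which no nonzero element of $A/I$ acts by zero. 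As $C\ot A$ is central, a version of Schur's lemma shows it acts by a scalar functional, so the central charges are recorded on $A/I$.

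The heart of the argument — and the step I expect to be the main obstacle — is to prove that $B:=A/I$ is finite-dimensional. In the bounded-above case (the bounded-below case being symmetric), let $W=M_{\mu_{\max}}$ be the finite-dimensional top space: the raising operators $L_n\ot A$ ($n<0$) kill $W$, the weight-zero part $\frh\ot A=(\C L_0\oplus\C C)\ot A$ preserves it, and $M=U(\Vir_{>0}\ot A)\,W$, so $M$ is a highest-weight module. Now $M_{\mu_{\max}-1}=(L_1\ot A)W$ is finite-dimensional, and the identity $[L_1\ot a,\,L_0\ot b]=L_1\ot ab$ shows that $K=\{a:(L_1\ot a)W=0\}$ is a \emph{finite-codimension ideal}; feeding this into the standard quasifiniteness criterion for highest-weight modules over a $\Z$-graded Lie algebra (the top representation being induced from a finite-dimensional module of a parabolic whose positive radical has finite codimension) forces $B$ to be finite-dimensional. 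In the unbounded case I would instead show that $M$ is uniformly bounded and that such a cuspidal module factors through a single \emph{reduced} point, i.e.\ $B=A/\sfm\cong\C$; this is the most delicate point, obtained by restricting to $\Vir=\Vir\ot 1$ and invoking Mathieu's theorem (every irreducible quasifinite $\Vir$-module with two-sided unbounded support is of the intermediate series, with one-dimensional weight spaces) to control multiplicities, then ruling out infinitesimal thickenings of the point by the boundedness of those multiplicities.

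Finally I would assemble the classification. Once $B=A/I$ is finite-dimensional, write $B\cong\bigoplus_{i=1}^\ell A/\sfm_i^{n_i}$ as a product of local Artinian rings; the ideals $\Vir\ot(A/\sfm_i^{n_i})$ pairwise commute (orthogonal idempotents), so $\Vir\ot B$ is their direct sum and an irreducible module over it is an outer tensor product $M\cong\bigotimes_i M_i$ of irreducible quasifinite $(\Vir\ot A/\sfm_i^{n_i})$-modules. Each $M_i$ inherits the boundedness of $\Supp M$, since $\Supp M=\sum_i\Supp M_i$: in the bounded-above case every $M_i$ is a highest-weight module, giving alternative (b); in the bounded-below case every $M_i$ is a lowest-weight module, giving (c); and in the unbounded case the previous step already yields $\ell=1$, $n_1=1$, so $M$ is the single-point evaluation of an intermediate-series $\Vir$-module, alternative (a). In every case $M$ is exhibited as a generalized evaluation module of the form~\eqref{eq:gen-eval}, as claimed.
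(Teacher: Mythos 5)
You should first be aware that the survey itself contains no proof of this theorem: it is quoted from \cite[Th.~5.5]{Sav-Virasoro}, so your proposal has to be measured against the proof in that paper. Your overall architecture does match its broad shape: the support trichotomy for $L_0 \ot 1$, the verification that $I = \{a \in A : (\Vir \ot a)M = 0\}$ is an ideal (your bracket computations are correct), the reduction to a finite-dimensional quotient $B = A/I$, and the splitting into local Artinian factors with an outer tensor product decomposition. But at the two places where the theorem is actually hard, you substitute a gesture for an argument. In the bounded-above case there is no off-the-shelf ``standard quasifiniteness criterion'' that converts finite codimension of $K = \{a : (L_1 \ot a)W = 0\}$ into finite-dimensionality of $B$. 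Two things are missing. First, $K$ records only degree-one data and says nothing about the central functional: since the top space $W$ is one-dimensional (the Cartan $(\C L_0 \oplus \C C) \ot A$ is abelian), the highest weight $\varphi$ has the two components $\varphi(L_0 \ot \cdot)$ and $\varphi(C \ot \cdot)$, and the latter is first constrained only in degree two, e.g.\ via $(L_{-2} \ot b)(L_2 \ot a)v = \bigl(-4\varphi(L_0 \ot ab) - \tfrac{1}{2}\varphi(C \ot ab)\bigr)v$; one needs quasifiniteness of $M_{\mu_{\max}-2}$ to produce a finite-codimension ideal on which $\varphi(C \ot \cdot)$ vanishes. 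Second, you must propagate annihilation from degree one at the highest weight vector to the whole module: an induction showing that, for a suitable finite-codimension ideal $J$, every raising operator kills $(L_n \ot a)v$ for $a \in J$, whence $(L_n \ot a)v = 0$ by irreducibility, and then $(\Vir \ot J)M = 0$ because $\Vir \ot J$ is an ideal of $\Vir \ot A$. All of this is doable in the spirit of your sketch, but none of it follows from the finite codimension of $K$ alone.

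The unbounded case is a genuine gap, not merely an omission. The restriction of $M$ to $\Vir \ot 1$ is in general \emph{not} irreducible, so Mathieu's theorem \cite{Mat92} cannot be ``invoked to control multiplicities'': it classifies irreducible quasifinite $\Vir$-modules and says nothing about the weight multiplicities of a reducible restriction. Consequently both assertions you need --- that $M$ is uniformly bounded, and that a uniformly bounded irreducible factors through a single \emph{reduced} point --- require independent proofs. For the second, note that no soft finiteness argument can succeed: thickened points genuinely occur in the bounded cases (this is exactly why alternatives (b) and (c) are generalized evaluation modules with $n_i > 1$ allowed), so ``boundedness of multiplicities rules out infinitesimal thickenings'' cannot be a formal consequence of quasifiniteness. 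One must actually prove that $\Vir \ot \sfm$ acts trivially on a uniformly bounded irreducible, which is where the cited paper does its real work, ultimately exploiting the one-dimensionality of the weight spaces of intermediate-series modules via detailed computations in the spirit of the loop-Virasoro analysis. A smaller point in the same direction: the outer tensor product decomposition over $\bigoplus_{i=1}^{\ell} \Vir \ot (A/\sfm_i^{n_i})$ is not automatic for infinite-dimensional Lie algebras and needs the quasifinite Schur-type argument to be stated.
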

For the twisted case or for other infinite-dimensional $\fra$, very little is known.

One can also consider the case when the target is a \emph{Lie superalgebra}.  In this case, the action of $\Gamma$ on $\g$ is by Lie superalgebra automorphisms (which preserve the even and odd parts of $\g$).  We then call $M(X,\g)^\Gamma$ an \emph{equivariant map (Lie) superalgebra}.  Under the assumptions that $\g$ is a so-called \emph{basic classical Lie superalgebra}, $A$ is finitely generated, $k$ is algebraically closed, and $\Gamma$ acts freely on the rational points of $X$, the irreducible finite-dimensional representations of equivariant map superalgebras were classified by the second author in \cite{Sav-super}. (In the special case of untwisted multiloop superalgebras with basic classical target, they were previously classified in \cite{Rao11,RZ04}.)  The basic classical Lie superalgebras can be split into two types: type I and type II.  A complete list (also indicating the even part $\g_{\bar 0}$ of $\g$) is as follows (see \cite{Kac77}).

\medskip
\begin{center}
  \begin{tabular}{|c|c|c|}
    \hline
    $\g$ & $\g_{\bar 0}$ & Type \\
    \hline
    $A(m,n)$, $m > n \ge 0$ & $A_m \oplus A_n \oplus k$ & I \\
    $A(n,n)$, $n \ge 1$ & $A_n \oplus A_n$ & I \\
    $\mathfrak{sl}(n,n)$, $n \ge 1$ & $A_{n-1} \oplus A_{n-1} \oplus k$ & N/A \\
    $C(n+1)$, $n \ge 1$ & $C_n \oplus k$ & I \\
    $B(m,n)$, $m \ge 0$, $n \ge 1$ & $B_m \oplus C_n$ & II \\
    $D(m,n)$, $m \ge 2$, $n \ge 1$ & $D_m \oplus C_n$ & II \\
    $F(4)$ & $A_1 \oplus B_3$ & II \\
    $G(3)$ & $A_1 \oplus G_2$ & II \\
    $D(2,1;\alpha)$, $\alpha \ne 0,-1$ & $A_1 \oplus A_1 \oplus A_1$ & II \\
    \hline
  \end{tabular}
\end{center}
\medskip

One can define the set $\cE^\Gamma$ as in Section~\ref{subsec:Lie-irr-fd-reps}.  We will use the notation $\cE(\g)^\Gamma$ to denote this set for a target Lie superalgebra $\g$.  We can then define evaluation representations $\ev^\Gamma_\psi$, $\psi \in \cE(\g)^\Gamma$, as in Definition~\ref{def:eval-rep}.  Let $\g_{\bar 0}^\rss = [\g_{\bar 0}, \g_{\bar 0}]$ be the semisimple part of $\g_{\bar 0}$ and let $\g_{\bar 0}^\ab$ be the center of $\g_{\bar 0}$.   Let
\[
  \mathcal{L}(X,\g_{\bar 0}^\ab) = \{\theta \in (\g_{\bar 0}^\ab \otimes A)^* : \theta(\g_{\bar 0}^\ab \otimes I)=0 \text{ for some ideal $I$ of $A$ with finite support}\}.
\]
There is a natural action of $\Gamma$ on $\mathcal{L}(X,\g_{\bar 0}^\ab)$ given by $\gamma \theta := \theta \circ \gamma^{-1}$.  We let $\mathcal{L}(X,\g_{\bar 0}^\ab)^\Gamma$ denote the set of $\Gamma$-invariant elements of $\mathcal{L}(X,\g_{\bar 0}^\ab)$.

\begin{theo}[{\cite[Th.~7.10]{Sav-super}}]
  Suppose $\g$ is a basic classical Lie superalgebra and let $\scS(X,\g)^\Gamma$ be the set of isomorphism classes of irreducible finite-dimensional representations of $(\g \otimes A)^\Gamma$.  Then we have the following:
  \begin{enumerate}
    \item If $\g_{\bar 0}$ is semisimple (i.e.\ $\g$ is of type II or is $A(n,n)$, $n \ge 1$), then the map
        \begin{equation} \label{eq:enumeration-g0-semsimple}
          \cE(\g)^\Gamma \to \scS(X,\g)^\Gamma,\quad \psi \mapsto \ev^\Gamma_\psi,
        \end{equation}
        is a bijection.

    \item If $\g _{\bar 0}$ is of type I, then the map
        \begin{equation} \label{eq:enumeration-g0-not-semsimple}
          \cL(X,\g_{\bar 0}^\ab)^\Gamma \times \cE(\g_{\bar 0}^\rss)^\Gamma \to \scS(X,\g)^\Gamma,\quad (\theta,\psi) \mapsto V^\Gamma(\theta, \psi),
        \end{equation}
        is a bijection, where $V^\Gamma(\theta,\psi)$ is a particular generalized evaluation module, called a \emph{Kac module}, associated to $\theta$ and $\psi$ (see \cite[Def.~7.9]{Sav-super}).
  \end{enumerate}
\end{theo}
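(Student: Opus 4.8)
The plan is to reduce the classification over the full equivariant map superalgebra $\frM = (\g \otimes A)^\Gamma$ to a purely even problem governed by Theorem~\ref{thm:Lie}, and then to account for the odd part via the $\ZZ$-grading of $\g$. I would begin by showing that every irreducible finite-dimensional representation $\rho \colon \frM \to \End_k V$ is a generalized evaluation module, i.e.\ factors through a quotient of the form $\bigoplus_i (\g \otimes A/\sfm_{x_i}^{n_i})$ with the $x_i$ lying in distinct $\Gamma$-orbits. To see this, restrict $\rho$ to the even subalgebra $\frM_{\bar 0} = (\g_{\bar 0} \otimes A)^\Gamma$, which is a (reductive-target) equivariant map Lie algebra: the finite-dimensional $\frM_{\bar 0}$-module $V$ decomposes into irreducibles, each of which, by Theorem~\ref{thm:Lie}, is an evaluation module tensored with a one-dimensional module. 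This pins the support of $V$ to finitely many orbits and, via the nilpotency of the image of $\g \otimes I$ for a suitable $\Gamma$-stable ideal $I$ of finite support together with irreducibility, shows that some cofinite power $\g \otimes \sfm_{x_i}^{n_i}$ annihilates $V$, yielding the generalized evaluation form.

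Next I would pass to a single orbit. Since $\Gamma$ acts freely on $X_\rat$, distinct orbits correspond to comaximal ideals, so the finite-dimensional quotient splits by the Chinese Remainder Theorem and $V$ becomes a (graded) tensor product of modules each supported on one orbit. It therefore suffices to classify the irreducible finite-dimensional modules of $\g \otimes B$ for a finite-dimensional local commutative algebra $B = A/\sfm_x^n$: free action gives $\g^x = \g$, so a chosen orbit representative $x$ carries the full target and taking $\Gamma$-invariants over the orbit recovers exactly $\g \otimes B$. This is the local heart of the argument.

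For the local problem I would exploit the $\ZZ$-grading $\g = \bigoplus_j \g_j$ with $\g_0 = \g_{\bar 0}$. Restricting to the semisimple even directions $\g_{\bar 0}^\rss \otimes B$ and applying the even theory, the nilpotent part $\g_{\bar 0}^\rss \otimes \frn$ (with $\frn \subset B$ the maximal ideal) acts trivially, so the semisimple data factors through a single point evaluation and is recorded by $\psi \in \cE(\g_{\bar 0}^\rss)^\Gamma$. The abelian even directions behave differently according to the type. In type II one has $\g_{\bar 0} = \g_{\bar 0}^\rss$, whence the entire even action is an evaluation; a highest-weight argument using the grading shows the odd generators also act through evaluation, so $V$ is an honest evaluation module and $\psi \mapsto \ev^\Gamma_\psi$ gives the bijection \eqref{eq:enumeration-g0-semsimple} onto $\scS(X,\g)^\Gamma$ (surjectivity from the reduction above, injectivity inherited from the even classification). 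In type I the one-dimensional center $\g_{\bar 0}^\ab$ is \emph{not} forced to factor through evaluation: it may act by an arbitrary linear form on $B$, which globalizes to the datum $\theta \in \cL(X,\g_{\bar 0}^\ab)^\Gamma$. Using the parabolic $\g_{\geq 0} = \g_0 \oplus \g_1$, I would build the Kac module $V^\Gamma(\theta,\psi)$ by inflating the even data $(\theta,\psi)$ to $\g_{\geq 0} \otimes A$ with $\g_1$ acting by zero and inducing via $U(\g \otimes A) \otimes_{U(\g_{\geq 0} \otimes A)} (-)$; its unique irreducible quotient realizes a general irreducible, yielding the bijection \eqref{eq:enumeration-g0-not-semsimple}.

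The main obstacle is this local analysis: proving that the semisimple even directions (and, in type II, the odd directions) are forced to factor through evaluation while the central direction is not. This demands a careful interplay between the nilpotency of $\frn \subset B$, the weight-space combinatorics coming from the $\ZZ$-grading, and the reducibility of induced (Kac) modules in the super setting — one must show the induced module admits a well-defined irreducible quotient and that the pair $(\theta,\psi)$ is a complete isomorphism invariant. Injectivity in $\theta$ holds because it is the central character, an isomorphism invariant; injectivity in $\psi$ reduces to the injectivity already established for the even equivariant map Lie algebra.
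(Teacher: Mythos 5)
Your outline tracks the actual proof of \cite[Th.~7.10]{Sav-super} (the survey itself only cites the result): reduce to generalized evaluation modules, split over orbits by the Chinese Remainder Theorem using freeness of the $\Gamma$-action, then do a local analysis over $\g \ot B$, $B = A/\sfm_x^n$, with Kac modules encoding the pair $(\theta,\psi)$. However, two steps fail as written. First, the claim that $V$ restricted to $\frM_{\bar 0}$ decomposes into irreducibles is false: the category of finite-dimensional $\frM_{\bar 0}$-modules is not semisimple, and in type I the central directions $\g_{\bar 0}^\ab \ot \frn$ genuinely act by nontrivial Jordan blocks on typical (hence irreducible) Kac modules over $\g \ot B$, so the restriction to the even part is not completely reducible precisely in the case that matters. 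Moreover, even replacing ``decomposes'' by ``has a composition series'' does not pin down cofinite annihilation, because the one-dimensional twists $\lambda \in (\frM_{\bar 0}/[\frM_{\bar 0},\frM_{\bar 0}])^*$ appearing in Theorem~\ref{thm:Lie} carry no support restriction --- and controlling exactly this central direction is the entire content of the finite-support condition defining $\cL(X,\g_{\bar 0}^\ab)$, so your argument is circular there. The standard repair bypasses the even restriction altogether: since $\g$ is simple, hence perfect, the set $I = \{a \in A : \g \ot a \subseteq \Ker \rho\}$ satisfies $\g \ot Aa = [\g \ot A, \g \ot a] \subseteq \Ker \rho$, so $I$ is a $\Gamma$-stable ideal of finite codimension, $V$ factors through $\g \ot (A/I)$, CRT gives the generalized evaluation form, and the finite-support condition on $\theta$ then comes for free.

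Second, your local analysis rests on a $\ZZ$-grading with $\g_0 = \g_{\bar 0}$, but that is the \emph{defining} property of type I; in type II the grading has the form $\g_{-2} \oplus \g_{-1} \oplus \g_0 \oplus \g_1 \oplus \g_2$ with $\g_{\bar 0} = \g_{-2} \oplus \g_0 \oplus \g_2$, so the ``highest-weight argument using the grading'' that is supposed to force the odd directions to evaluate has no structure to run on. What is actually needed (and what the cited proof supplies) is a super Engel-type induction on the nilpotent ideal $\g \ot \frn$: assuming $\g \ot \frn^k$ acts by zero, the even part $\g_{\bar 0} \ot \frn^{k-1}$ acts through a character that vanishes because semisimplicity of $\g_{\bar 0}$ gives $\g_{\bar 0} \ot \frn^{k-1} = [\g_{\bar 0},\g_{\bar 0}] \ot \frn^{k-1} \subseteq [\g \ot B, \g \ot \frn^{k-1}]$, while the odd part $\g_{\bar 1} \ot \frn^{k-1}$ has brackets acting by zero and hence generates an exterior algebra in $\End_k V$, so its joint kernel is a nonzero submodule, hence all of $V$. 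Note that Lie's theorem is unavailable for solvable Lie superalgebras, which is exactly why this even/odd split cannot be skipped. Finally, a small repair to your injectivity argument: $\theta$ is \emph{not} a central character of $\frM$, since $\g_{\bar 0}^\ab \ot A$ is not central in the superalgebra (it acts nontrivially on the odd part); instead $(\theta,\psi)$ is recovered as the action of the even part on the space of $(\g_1 \ot A)^\Gamma$-invariants (the highest weight space of the Kac module), which is an isomorphism invariant. With these three repairs your outline coincides with the proof in the cited reference.
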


%%%%%%%%%%%%%%%%%%%%%%%
%
\section{Open problems} \label{sec:open}
%
%%%%%%%%%%%%%%%%%%%%%%%

In this section we discuss some open problems. The choice of problems reflects of course the authors' taste and is by no means exhaustive.

%%%%%%%%%%%%%%%%%%%%%%%%%%%%%%%%%%%%%%%%%%%%%%%%%%
\subsection{New types of equivariant map algebras} \label{subsec:new-EMAs}
%%%%%%%%%%%%%%%%%%%%%%%%%%%%%%%%%%%%%%%%%%%%%%%%%%

Although we have assumed in Definitions~\ref{Defmap} and~\ref{defEMA} that $\fra$ is finite-dimensional and that $\Ga$ is finite, these restrictions are not necessary for the definitions. There are indeed many natural generalizations. We list here some interesting possibilities.

\begin{asparaenum}
  \item \emph{More general groups $\Gamma$.}  Instead of a finite group $\Gamma$, one can consider, for instance, an algebraic group acting completely reducibly on $A$ and $\fra$.  The first step in the study of the resulting EMAs would be to classify their irreducible finite-dimensional representations.

  \item \emph{More general targets.}  There are many possible choices for the ``target'' algebra $\g$ for which the resulting EMA has not been studied.  Even in the setting of Lie algebras, one could consider $\g$ to be, for instance, a Kac-Moody algebra (see \cite{RF10} for the untwisted multiloop case) or some other well-studied infinite-dimensional Lie algebra, such as the Heisenberg algebra.
      One can also move beyond the Lie (or associative) algebra setting.  In fact, one can study the representation theory of any EMA for which the target is an algebra with well-defined representations.  For example, it would be natural to consider the finite-dimensional representations in the case of alternative algebras or Jordan algebras.

  \item \emph{Superschemes and superalgebras.} The equivariant map superalgebra setting of \cite{Sav-super} can be naturally generalized.  First, one can consider more general Lie superalgebras for the target $\g$.  For example, one could take any finite-dimensional simple Lie superalgebra (not just basic classical ones).  Furthermore, one can replace $A$ by a supercommutative algebra (in other words, let $X$ be a superscheme) and $\Gamma$ by a $\Z_2$-graded group.

  \item \emph{Analytic setting.}  Another interesting problem is to replace the algebraic setting by an analytic one.  For example, let $X$ be a Banach manifold, $\fra$ a Banach algebra and consider equivariant differentiable maps $X \to \fra$.  See, for example, \cite{neeb,neeb-s,wok} for work in this direction.
\end{asparaenum}

%%%%%%%%%%%%%%%%%%%%%%%%%%%%%%%%%%%%%%%%%%%%%
\subsection{General equivariant map algebras} \label{subsec:prob-arbit}
%%%%%%%%%%%%%%%%%%%%%%%%%%%%%%%%%%%%%%%%%%%%%

Even for the most familiar types of EMAs, there is still much to be learned about their representation theory.  In this subsection, we list some open problems in this direction that apply to
general EMAs (e.g.\ both the associative and Lie setting).  In
Section~\ref{subsec:open-EMLA} we will consider the Lie case in further detail.
In this subsection $\fra$ is an arbitrary algebra of finite dimension over an algebraically closed field. We abbreviate $\frM= M(X,\fra)^\Ga$.

\subsubsection{Isomorphism problem} \label{prob:isomorp}

Characterizing EMAs up to isomorphism is one of the fundamental problems in the theory of EMAs. In the case of loop algebras, a deep result of Kac states that a loop algebra $M(k^\times, \g)^\Ga$ with $\g$ a simple Lie algebra and $\Ga$ cyclic is isomorphic to a loop algebra $M(k^\times, \g)^{\lan \si\ran}$ with $\si$ a diagram automorphism of $\g$ (see \cite[Th.~8.5]{kac}).  Kac' Theorem has recently been extended to the case $M\big( (k^{\times})^2, \frg)^\Ga$ for $\Ga=\lan \si_1, \si_2\ran$ abelian in \cite{abp3}. While this paper uses Lie algebra theory, in the paper \cite{gp} the authors consider $M\big( (k^{\times})^2, \fra)^\Ga$, $\Ga$ as before, for an arbitrary algebra $\fra$ and establish a cohomological invariant for this equivariant map algebra. It is therefore natural, though likely difficult, to try to generalize the results of both papers \cite{abp3} and \cite{gp} to the multiloop case and beyond that to more general EMAs.

\subsubsection{Automorphisms}

Related to the problem of characterizing EMAs up to isomorphism, is the question of determining the automorphism groups of EMAs. In the case where $\g$ is a Lie algebra and $A$ is an integral domain with trivial Picard group, the automorphism group of $\g \ot A$ is described in \cite{pi:aut}; for Lie superalgebras see \cite{GraPi}. Results about the automorphism group of the twisted loop algebra can be obtained by specializing the paper \cite{kac-wang}, which studies the automorphism group of arbitrary Kac-Moody algebras; see also \cite{abp3}. Results about the automorphism groups of multiloop algebras can be found in \cite{gp}.

\subsubsection{Indecomposable modules} \label{prob:indecomposable}

The classification of the irreducible objects  in the category of finite-dimensional modules of $\frM$ for an associative $\fra$ in  Theorem~\ref{th:rep-ass} is only a first step towards a  complete understanding of the category.  Ideally, one would like to describe all of the indecomposable finite-dimensional modules and understand the block decomposition of this category.  See  Section~\ref{prob:Lie-rep} for a more detailed discussion of this problem in the Lie setting.

\subsubsection{Associative bialgebras}

In the case that $\fra$ is an associative bialgebra, one can naturally define multi-orbit evaluation representations using the comultiplication on $\fra$.  It would be interesting to understand these representations.

%%%%%%%%%%%%%%%%%%%%%%%%%%%%%%%%%%%%%%%%%
\subsection{Equivariant map Lie algebras} \label{subsec:open-EMLA}
%%%%%%%%%%%%%%%%%%%%%%%%%%%%%%%%%%%%%%%%%

In this subsection we consider the most well-studied types of EMAs: the equivariant map Lie algebras. We let $\frg$ denote a finite-dimensional Lie algebra over an algebraically closed field of characteristic zero and we use the acronym EMA to mean ``equivariant map Lie algebra''.  As usual, $\frM=M(X,\g)^\Ga$.

\subsubsection{Representation Theory} \label{prob:Lie-rep}

As mentioned in Section~\ref{subsec:prob-arbit}, the study of the representation theory of EMAs goes well beyond the classification of irreducible finite-dimensional modules.  Even in the Lie setting, where the extensions between irreducible finite-dimensional modules have been described in \cite{NS}, there are many open problems.  We list here some particularly important ones.

\begin{asparaenum}
  \item \emph{Higher extensions.} Only the first Ext groups are calculated in \cite{NS}, where it is shown that these groups are related to both the representation theory of the target $\g$ and the geometry of the scheme $X$ (see, for instance, Proposition~\ref{prop:ext-free-abelian}, which involves the tangent space).  It is natural to try to compute higher Ext groups.  One would expect a rich interplay between these groups and the geometry of the scheme $X$.  It would be particularly interesting to see how the global dimension of the category of finite-dimensional modules is related to this geometry.

  \item \label{open-prob:Weyl-modules} \emph{Weyl modules.}  The study of (global and local) Weyl modules for EMAs is an active area of research.  In particular, one would like to extend the results on the lengths, dimensions and characters of the local Weyl modules (see \cite{CP01,BN04,CL06,CM04,FoL07,Nak01,Nao10}) to the setting of more arbitrary EMAs (see~\cite{FMS12} for the definitions of global and local Weyl modules in this setting).  It is also natural to try to extend the notion of BGG reciprocity described in \cite{BCM11,BBCKL12} to a more general EMA setting (see \cite[Rem.~2.7]{BBCKL12}).

  \item \emph{Possibly infinite-dimensional representations.}  Much of the current focus in the study of the representation theory of arbitrary EMAs has been on the category of finite-dimensional representations.  However, in specific cases (most importantly, the loop and current algebras) more general categories of representations have been studied.  For instance, one can consider a type of category $\mathcal{O}$ for (central extensions of) loop algebras (see, for example, the survey article \cite{Cha11}).  It would be interesting to study analogous categories in the more arbitrary EMA setting.  For example, one could consider the category of integrable highest weight representations, the category of quasifinite representations (i.e.\ representations with finite-dimensional weight spaces) or appropriate generalizations of category $\mathcal{O}$.
\end{asparaenum}

\subsubsection{Cohomology} \label{prob:Lie-coho}

Assume $\g$ is semisimple.  The formulas for $\Ext_\frM(V_1, V_2)$ proven in \cite{NS} have a cohomological interpretation since
\[
  \Ext_\frM(V_1, V_2) \cong \rmH^1\big(\frM, \Hom_k(V_1, V_2)\big).
\]
One can therefore expect that it should be possible to describe the first cohomology group $\rmH^1(\frM, V)$ for a general $\frM$-module $V$. For some equivariant map algebras $\frM$, including multiloop algebras, and locally-finite $\frM$-modules, this is done in \cite{NP}. A particular instance of this question is the adjoint module $\frM_{\ad}$, for which $\rmH^1(\frM, \frM_\mathrm{ad}) = \Der_k(\frM)/\IDer(\frM)$. The structure of the derivation algebra in the untwisted case $\frM=\g\ot A$ is determined in \cite{block,bemo,Azam:tensor} and for multiloop algebras in \cite{Azam:multi,pi:der}. In both cases, $\g$ need not be a Lie algebra but can be an arbitrary finite-dimensional $k$-algebra.

A related problem is to determine the homology groups $\rmH_*(\frM, V)$. Some
results on $\rmH_0(\frM, \frM_\mathrm{ad}) = \frM/[\frM,\frM]$ are contained in
\cite{NSS} and \cite{NS}, see for example Lemmas~6.1 and 6.10 of \cite{NS}
describing classes of perfect equivariant map algebras. The interest in
perfect equivariant map algebras comes from Theorem~\ref{thm:Lie} which
indicates that these algebras have a simpler representation
theory. A second reason is that perfect Lie algebras have universal central
extensions, as discussed in Section~\ref{prob:uce} below.

\subsubsection{Central extensions}\label{prob:uce}

Central extensions of Lie algebras often have a simpler representation
theory, the affine Kac-Moody Lie algebras being a case in point. It is
therefore of interest to understand central extensions of equivariant map
algebras $\frM$, preferably the universal central extension $\uce(\frM)$ of a perfect $\frM$.

In the untwisted case $\uce(\g \ot A)$ is described in \cite{Kas} and later
again in \cite{Rao-Moody}. In general, the canonical map $\uce(\g\ot A) \to \g
\ot A$ has an infinite-dimensional kernel, contrary to what happens in
the case of loop algebras. For example, this is already so for $A=k[t_1^{\pm
1}, \ldots, t_n^{\pm 1}]$ and $n\ge 2$.

Generalizing the example of twisted loop algebras, one would like to relate the universal central extension of a perfect equivariant map algebra $(\g\ot A)^\Ga$ to the well-understood universal central extension $\uce(\g\ot A)$. Observe that the action of $\Ga$ on $\g \ot A$ uniquely lifts to an action on $\uce(\g \ot A)$. The formula one is aiming for is
then very natural: $\uce\big( (\g \ot A)^\Ga\big) \cong \big(\uce(\g \ot A)\big)^\Ga$. It is known that this is indeed true for certain multiloop algebras, the so-called Lie tori (see~\cite{n:eala,sun:uce}).

%%%%%%%%%%%%%%%%%%%%%%%%%%%%%%%%%%%%%%%%%%%%%%%%%%%%%%%%%%%%%%%%%%%
% References
%%%%%%%%%%%%%%%%%%%%%%%%%%%%%%%%%%%%%%%%%%%%%%%%%%%%%%%%%%%%%%%%%%%

\bibliographystyle{alpha}
\bibliography{neher-savage-survey-biblist}

\end{document}